\setlist[enumerate]{itemsep=0.5ex}
\theoremstyle{plain}
\newtheorem{theorem}{Theorem}[section]
\newtheorem{proposition}[theorem]{Proposition}
\newtheorem{lemma}[theorem]{Lemma}
\newtheorem{question}[theorem]{Question}
\theoremstyle{definition} 
\newtheorem{definition}[theorem]{Definition}
\newtheorem{example}[theorem]{Example}
\newtheorem*{claim*}{Claim}
\newtheorem{claim}[theorem]{Claim}
\theoremstyle{remark} 
\newtheorem{remark}[theorem]{Remark}
\numberwithin{equation}{section}
\newcommand{\Sc}{\mathrm{Sc}}
\newcommand{\R}{\mathbb{R}}
\newcommand{\dist}{\mathrm{dist}}
\newcommand{\Endo}{\mathrm{End}}
\newcommand{\Bigwedge}{\mathord{\adjustbox{raise=.4ex, totalheight=.7\baselineskip}{$\bigwedge$}}}
\newcommand{\Swedge}{\mathord{\adjustbox{raise=.4ex, totalheight=.55\baselineskip}{$\bigwedge$}}}
\newcommand{\ind}{\textup{Ind}}
\newcommand{\id}{\mathrm{id}}
\newcommand{\sph}{\mathbb{S}}
\newcommand{\Ric}{\mathrm{Ric}}
\newcommand{\tcon}{\widehat \nabla}
\newcommand{\tdirac}{\widehat D}
\newcommand{\grad}{\text{grad}}
\newcommand{\expand}{\rho}
\newcommand{\interior}[1]{%
	{\kern0pt#1}^{\mathrm{\,o}}%
}
\let\save@mathaccent\mathaccent
\newcommand*\if@single[3]{%
	\setbox0\hbox{${\mathaccent"0362{#1}}^H$}%
	\setbox2\hbox{${\mathaccent"0362{\kern0pt#1}}^H$}%
	\ifdim\ht0=\ht2 #3\else #2\fi
}
\newcommand*\rel@kern[1]{\kern#1\dimexpr\macc@kerna}
\newcommand*\wideaccent[2]{\@ifnextchar^{{\wide@accent{#1}{#2}{0}}}{\wide@accent{#1}{#2}{1}}}
\newcommand*\wide@accent[3]{\if@single{#2}{\wide@accent@{#1}{#2}{#3}{1}}{\wide@accent@{#1}{#2}{#3}{2}}}
\newcommand*\wide@accent@[4]{%
	\begingroup
	\def\mathaccent##1##2{%
		\let\mathaccent\save@mathaccent
		\if#42 \let\macc@nucleus\first@char \fi
		\setbox\z@\hbox{$\macc@style{\macc@nucleus}_{}$}%
		\setbox\tw@\hbox{$\macc@style{\macc@nucleus}{}_{}$}%
		\dimen@\wd\tw@
		\advance\dimen@-\wd\z@
		\divide\dimen@ 3
		\@tempdima\wd\tw@
		\advance\@tempdima-\scriptspace
		\divide\@tempdima 10
		\advance\dimen@-\@tempdima
		\ifdim\dimen@>\z@ \dimen@0pt\fi
		\rel@kern{0.6}\kern-\dimen@
		\if#41
		#1{\rel@kern{-0.6}\kern\dimen@\macc@nucleus\rel@kern{0.4}\kern\dimen@}%
		\advance\dimen@0.4\dimexpr\macc@kerna
		\let\final@kern#3%
		\ifdim\dimen@<\z@ \let\final@kern1\fi
		\if\final@kern1 \kern-\dimen@\fi
		\else
		#1{\rel@kern{-0.6}\kern\dimen@#2}%
		\fi
	}%
	\macc@depth\@ne
	\let\math@bgroup\@empty \let\math@egroup\macc@set@skewchar
	\mathsurround\z@ \frozen@everymath{\mathgroup\macc@group\relax}%
	\macc@set@skewchar\relax
	\let\mathaccentV\macc@nested@a
	\if#41
	\macc@nested@a\relax111{#2}%
	\else
	\def\gobble@till@marker##1\endmarker{}%
	\futurelet\first@char\gobble@till@marker#2\endmarker
	\ifcat\noexpand\first@char A\else
	\def\first@char{}%
	\fi
	\macc@nested@a\relax111{\first@char}%
	\fi
	\endgroup
}
\newcommand\overbar{\wideaccent\overline}
\newcommand*{\transpose}{%
	{\mathpalette\@transpose{}}%
}
\newcommand*{\@transpose}[2]{%
	\raisebox{\depth}{$\m@th#1\intercal$}%
}
\begin{document}
	
\title{Scalar curvature rigidity of degenerate warped product spaces }

\author{Jinmin Wang}
\address[Jinmin Wang]{Department of Mathematics, Texas A\&M University}
\email{jinmin@tamu.edu}
\thanks{The first author is partially supported by  NSF  1952693.}
\author{Zhizhang Xie}
\address[Zhizhang Xie]{ Department of Mathematics, Texas A\&M University }
\email{xie@tamu.edu}
\thanks{The second author is partially supported by NSF  1952693.}

\begin{abstract}
	In this paper we prove the scalar curvature extremality and rigidity for a class of warped product spaces that are possibly degenerate at the two ends. The leaves of these warped product spaces can be any closed Riemannian manifolds with nonnegative curvature operators and nonvanishing Euler characteristics, flat tori, round spheres and their direct products. In particular, we obtain the scalar curvature extremality and rigidity for certain degenerate toric bands and also for round spheres with two antipodal points removed. This answers positively the corresponding questions of Gromov in all dimensions. 
\end{abstract}
\maketitle

\section{Introduction}
Scalar curvature extremality and rigidity problems occupy a central role in Riemannian geometry. The first examples of  scalar curvature  rigidity are flat tori and standard round spheres. Specifically, Schoen--Yau \cite{MR535700, MR541332} and Gromov--Lawson \cite{MR569070} showed that the torus $\mathbb T^n$ admits no metric of positive scalar curvature, and any metric on $\mathbb T^n$ with nonnegative scalar curvature is a flat metric.  For the standard round sphere $(\sph^n, g_{st})$, Llarull proved that if  $g$ is a Riemannian metric on   $\sph^n$ such that $g\geq g_{st}$ and $\Sc_g \geq \Sc_{g_{st}}$, then $g= g_{st}$  \cite{Llarull}. Here $\Sc_g$ stands for the scalar curvature of $g$. 

Goette and Semmelmann generalized  the theorem of Llarull and proved the scalar curvature extremality and rigidity for all closed manifolds with non-vanishing Euler characteristics that are equipped with metrics having nonnegative curvature operators \cite{GoetteSemmelmann}. Later on, Lott extended their theorem  to a scalar-and-mean curvature extremality and rigidity theorem for compact manifolds with smooth boundary \cite{Lottboundary}. 

 Inspired by Gromov's $\mu$-bubble approach to scalar curvature problems, Cecchini and Zeidler proved a scalar-and-mean curvature extremality and rigidity \cite{Cecchini:2021vs} for the following class of compact warped product spaces: $(X\times I, g)$ satisfying that  $X$ is closed spin manifold with non-zero Euler characteristic, $I = [a, b]$ is a closed finite interval, and $g$ is a warped product metric of the form: 
 \[  g = dr^2 + \varphi(r)^2 g_X \]
 such that $g_X$ is a metric on $X$ with nonnegative curvature operator and $\varphi$ is a strictly log-concave positive function on $I = [a, b]$.  In contrast with the results of Llarull \cite{Llarull}, Goette--Semmelmann \cite{GoetteSemmelmann} and Lott \cite{Lottboundary}, the result of Cecchini--Zeidler only requires the metric of the leaf $X$ to have nonnegative curvature operator, rather than requiring the entire underlying manifold to satisfy this condition. For example,  it is applicable to  annuli in odd dimensional hyperbolic spaces, where an annulus is viewed as a warped product space of the form $\sph^{n-1}\times [a, b]$.  More recently, the authors have generalized the theorem of Cecchini--Zeidler  and obtained a dihedral ridigity theorem for a class of codimension zero compact \emph{submanifolds} with polyhedral corners in warped product spaces \cite{Wang:2023warp}. It is worth noting that these submanifolds themselves are not necessarily warped product spaces and may have faces that are neither orthogonal nor parallel to the radial direction of the warped product metric.
 
 Thus far, all the aforementioned results have primarily focused on addressing the scalar curvature extremality and rigidity problem in the context of \emph{compact}, hence \emph{complete}, manifolds. From a technical standpoint, this emphasis on completeness is crucial for making sense of the relevant index theory. Therefore, it is remarkable that Gromov, using his $\mu$-bubble techniques, managed to establish  scalar curvature extremality and rigidity for certain \emph{incomplete}  warped product spaces \cite{Gromov4lectures2019}, which we will refer to as  \emph{degenerate warped product spaces} from now on.  More precisely, Gromov sketched a proof of the scalar curvature rigidity for certain degenerate toric bands \cite{Gromov4lectures2019} in dimensions $n+1\leq 8$. These toric bands $\mathbb T^{n}\times (-\frac{\pi}{n}, \frac{\pi}{n})$ are equipped with the warped product metrics 
\[ g= dr^2 + \varphi(r)^2 g_0\]
where $g_0$ is a flat metric on $\mathbb T^{n}$ and 
\[ \varphi(r) = \left(\cos \frac{n r}{2}\right)^{2/n}. \]
In the same paper \cite{Gromov4lectures2019}, Gromov also sketched a proof for  the scalar rigidity for  the $n$-dimensional standard round sphere with two antipodal punctures, denoted as $(\sph^n\backslash\{\pm\}, g_{st})$,   in dimensions $3\leq n \leq 8$ (cf. \cite{Hirsch:2022aa} and \cite{MR4594778}   for the dimension three case). One key observation made by Gromov is to view the space  $(\sph^n\backslash\{\pm\}, g_{st})$  as a warped product space
\[  g_{st} = dr^2 + \cos(r)^2 g^{\sph^{n-1}}_{st}  \]
  where $r\in (-\pi/2, \pi/2)$ and $g^{\sph^{n-1}}_{st} $ is the standard round metric on $\sph^{n-1}$. Note that the dimensional restriction in both of Gromov's results arises due to the usual regularity issue encountered in minimal hypersurface theory.

In this paper, we generalize the results of Gromov and prove the scalar curvature extremality and rigidity for a fairly large class of degenerate warped product spaces in  all dimensions. The main class of warped product spaces we consider is the following. Let $M=(-c,c)\times X$ be an $n$-dimensional manifold equipped with the following warped product metric 
$$g=dr^2+\varphi(r)^2g_X.$$ 
The leaf $X$ is allowed to be the Riemannian product of finitely many spaces from any of the following classes of closed manifolds: 
\begin{enumerate}[label=(\roman*)]
	\item round spheres of any dimension,
	\item closed Riemannian manifolds with nonnegative curvature operators and nonvanishing Euler characteristics, and 
	\item flat tori.
\end{enumerate}
The warping function $\varphi$ is required to be  \emph{admissible} in the following sense. 

\begin{definition}\label{def:admissible}
 We say a warping function $\varphi$ is \emph{admissible} if $\varphi$ satisfies the following properties:
 \begin{enumerate}
 	\item $\varphi$ is log-concave, that is, $(\log\varphi)''\leq 0$,
 	\item $\varphi(r)>0$ for $r\in(-c,c)$, and $\lim_{r\to \pm c} \varphi(r)=0$,
 	\item there exists a small $\varepsilon>0$ such that $\psi'(r)+n\,\psi(r)^2/2$ on the interval $(c, c-\varepsilon)$ is non-decreasing, and $\psi'(r)+n\,\psi(r)^2/2$ on the interval $(-c,-c+\varepsilon)$ is non-increasing,   where $\psi=(\log\varphi)'$ and $n=\dim M$.
 \end{enumerate}  
\end{definition}
The log-concavity of $\varphi$ is a commonly expected necessary condition for the scalar curvature extremality and rigidity of warped product spaces. However, the above definition introduces a new condition (3), which, to the best of the authors' knowledge, has not been previously considered in the literature regarding scalar curvature extremality and rigidity. Despite its somewhat technical nature, condition (3) is shown to be necessary through Example \ref{ex:nonrigid} below. More precisely, Example \ref{ex:nonrigid} shows that if we drop condition (3), then scalar curvature extremality and rigidity \emph{fail} for certain degenerate toric bands with warping functions satisfying conditions (1) and (2).

Before we state the main theorem of the paper, we recall the definition of spin maps. 
\begin{definition}
	A  map $f\colon N \to M$ between two oriented  manifolds $N$ and $M$ is called a spin map if the second Stiefel--Whitney classes of $TM$ and $TN$ are related by 
	\[  w_2(TN) = f^\ast(w_2(TM)).\]
	Equivalently, $f\colon N \to M$ is a spin map   if $TN\oplus f^\ast TM$ admits a spin structure. 
\end{definition}

We have the following main theorem of the paper.

\begin{theorem}\label{thm:extremality}
	Let $M=(-c,c)\times X$ be an $n$-dimensional manifold equipped with the warped product metric 
	$$g=dr^2+\varphi(r)^2g_X$$
such that 
	\begin{enumerate}[label=$(\arabic*)$]
		\item $\varphi$ is admissible in the sense of Definition $\ref{def:admissible}$ and 
		\item $(X,g_X)$ is the Riemannian product of finitely many spaces from the classes \textup{(i)--(iii)} listed above.
	\end{enumerate}
	Let $(N,\overbar g)$ be a Riemannian manifold and $f\colon N\to M$ be a smooth spin proper map with non-zero degree. If $f$ is distance non-increasing and $\Sc_{\overbar g}\geq f^*\Sc_g$, then $\Sc_{\overbar g}=f^*\Sc_g$. Furthermore, the following hold. 
		\begin{enumerate}[label=$(\mathrm{\Roman*})$]
		\item If $\varphi$ is strictly log-concave, that is, $(\log\varphi)''<0$, then $N=(-c,c)\times Y$ for some Riemannian manifold $(Y,g_Y)$ and the metric  $\overbar g=dr^2+\varphi(r)^2g_Y$, and the map $f$ respects  the product structures. 
		\item If $\varphi$ is strictly log-concave and the metric $g_X$ on the leaf $X$ has positive Ricci curvature, then $f$ is a local isometry.
	\end{enumerate}
\end{theorem}
Our approach uses the index theory of twisted Dirac operators coupled with potentials. However, due to the noncompactness of the underlying space and the incompleteness of the metric, it seems unfeasible to hope for a general index theory on the entire underlying space. To get around this, we focus on codimension zero compact submanifolds with boundary of the underlying space, where the classical index theory for manifolds with boundary can be  applied. However, this approach inevitably introduces additional error terms when comparing various geometric quantities,  such as scalar curvatures and mean curvatures. To overcome this difficulty, a key aspect of our proof involves carefully balancing these extra error terms with the comparison conditions given by our assumptions. We show that the failure of the conclusions of our main theorem would yield a geometric term that, via a Poincar\'e type inequality (Lemma \ref{lemma:poincare}),   ultimately dominates these additional error terms. This leads to a contradiction, hence proves our theorem. 

We remark that  the case where the leaf $(X, g_X)$ is an odd dimensional sphere or torus, the  vanishing of the Euler characteristic of the leaf imposes an extra difficulty. When  the leaf is an odd dimensional sphere,   we follow Llarull's idea of taking the  direct product with a large circle\footnote{More precisely, one also needs to consider the smashed product of a sphere with a circle. Note that the smashed product of an odd dimensional sphere with a circle is an even dimensional sphere, where the latter has nonzero Euler characteristic.} \cite[Section 4]{Llarull}. When the leaf is a torus, we pair the Dirac operator with an almost flat bundle. In both cases,  the corresponding procedure introduces an extra small error term in the relevant curvature estimates.  A key step of our proof is to dominate this extra error term by again  a  Poincar\'e type inequality. 

In fact, due to the extra error term caused by introducing an auxiliary circle, there is a minor gap in Llarull's  proof  for the scalar rigidity of a \emph{closed} standard \emph{odd} dimensional sphere \cite[Section 4]{Llarull}. We make the observation that the minor gap in Llarull's original arugment can be fixed by applying the Poincar\'e type inequality we mentioned above. 
 
\begin{theorem}[{Llarull \cite{Llarull}}]\label{thm:llarull}
Let $\mathbb S^{2k+1}$ be the $(2k+1)$-dimensional standard round sphere. Let $(N,\overbar g)$ be a closed spin Riemannian manifold and  $f\colon N\to M$  a smooth map with non-zero degree. If  $\Sc_{\overbar g}\geq 2k(2k+1)$ and $f$ is area-non-increasing,  then $f$ is an isometry.
\end{theorem}
Of course, if we artificially remove two antipodal points of $\mathbb S^{2k+1}$ and view it as a warped product space, Theorem \ref{thm:llarull} appears to be a special case of  Theorem \ref{thm:extremality}.  However, it is important to note  the different assumptions on the map $f$.  The map $f$ is only assumed to \emph{area-non-increasing} in Theorem \ref{thm:llarull}, as opposed to being distance-non-increasing in Theorem \ref{thm:extremality}. It is worth pointing out that in general we cannot replace the assumption that $f$ is distance-non-increasing in Theorem \ref{thm:extremality} by the weaker assumption that $f$ is area-non-increasing. On the other hand, our proof shows that Theorem \ref{thm:extremality} still holds under the  weaker assumption that $f$ is distance-non-increasing along the warping direction and area-non-increasing along the leaf direction. More precisely, let us write $f(x) = (r, z) \in M = (-c, c)\times X$, and  $X_r = \{r\}\times X$ equipped with metric $\varphi(r)^2 g_X$. Define $P$ to be the orthogonal  projection from $T_{f(x)}M$ to $T_{f(x)} X_r$.  Then instead of being distance-non-increasing, we only need to assume the function $f$ in Theorem \ref{thm:extremality} to satisfy that  $f^\ast r$ over $N$ is $1$-Lipschitz and $Pf_\ast: T_xN \to T_{f(x)}X$ is area-non-increasing for all $x\in N$.  

In the special case where the leaf $X$ of $M = (-c, c)\times X$ is a flat torus, we have the following slight improvement of Theorem \ref{thm:extremality}.   

\begin{theorem}\label{thm:extremalityTorus-intro}	Let $M=(-c,c)\times X$ be an open manifold and
	$$g=dr^2+\varphi(r)^2g_X$$
	a warped product metric on $M$ such that
	\begin{enumerate}
		\item $\varphi$ is admissible in the sense of Definition \ref{def:admissible}, and
		\item $(X,g_X)$ is the torus $\mathbb T^{n-1}$ equipped with a flat metric $g_o$.
	\end{enumerate}
	Let $(N,\overbar g)$ be a spin Riemannian manifold and $f\colon N\to M$ be a smooth proper map with non-zero degree. If the function $f^*r$ over $N$ is of Lipschitz constant at most $1$ and $\Sc_{\overbar g}\geq f^*\Sc_g$, then $\Sc_{\overbar g}=f^*\Sc_g$. Furthermore, if in addition  $\varphi$ is strictly log-concave, that is, $(\log\varphi)''<0$, then $N=(-c,c)\times Y$, the map $f$ respects  the product structures, and the metric  $\overbar g$ is also a warped product metric of the form \[ \overbar g = dr^2+\varphi(r)^2g_Y,  \]
	where $g_Y$ is a flat metric on $Y$. 
\end{theorem}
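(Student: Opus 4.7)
The plan is to revisit the proof of Theorem \ref{thm:extremality} (in the strengthened form indicated in the remark following it) and to show that, when the leaf $X=\mathbb{T}^{n-1}$ is a flat torus, the area-non-increasing hypothesis on $Pf_\ast$ becomes redundant. In the general proof, one constructs a twisted Dirac operator $D$ on $N$ whose twisting bundle $E$ pulls back (via $f$) a natural bundle from the leaf, and the resulting Lichnerowicz-type formula takes the schematic form
\[
D^{2} \;=\; \nabla^{*}\nabla + \tfrac{1}{4}\Sc_{\overbar g} + \mathcal{T}_{\varphi} + \mathcal{R}^{E},
\]
where $\mathcal{T}_\varphi$ is the warping contribution (controlled by the admissibility of $\varphi$ together with the $1$-Lipschitz condition on $f^{*}r$) and $\mathcal{R}^{E}$ is the curvature term of the twisting bundle. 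The only place where the area-non-increasing condition on $Pf_{\ast}$ enters is in bounding $\mathcal{R}^{E}$ pointwise by a multiple of $f^{*}\Sc_{g_X}$, via a Goette--Semmelmann-type estimate that uses the nonnegativity of the curvature operator of $X$.

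For $X=\mathbb{T}^{n-1}$, however, the curvature operator of $g_X$ vanishes identically, so the spin bundle $S_X$ of $\mathbb{T}^{n-1}$ carries a flat Levi-Civita spin connection. Taking $E:=f^{*}S_X$ with the pulled-back connection forces $\mathcal{R}^{E}\equiv 0$, since the curvature of a pullback connection is the pullback of the curvature, irrespective of any metric distortion by $f_\ast$. This is the crucial point: the area-non-increasing hypothesis on $Pf_\ast$ can simply be discarded, and only the $1$-Lipschitz condition on $f^{*}r$ is needed to ensure $\tfrac{1}{4}(\Sc_{\overbar g}-f^{*}\Sc_g)+\mathcal{T}_{\varphi}\geq 0$ exactly as in the original proof.

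Once $\mathcal{R}^{E}=0$ is secured, the remainder of the argument follows verbatim from Theorem \ref{thm:extremality}: the resulting nonnegative Lichnerowicz inequality, combined with the nonzero-degree assumption on $f$ and the Callias-type boundary behaviour at the degenerate ends $\varphi(\pm c)=0$, produces a nontrivial index obstruction that forces $\Sc_{\overbar g}=f^{*}\Sc_g$. The warped product conclusion under strict log-concavity of $\varphi$ then follows directly from part (1) of Theorem \ref{thm:rigidity}. The main technical subtlety I expect will be verifying that the simplified twisting bundle $f^{*}S_X$ still produces a nontrivial index pairing on $N$; this reduces, as in the degenerate toric band case already present inside the proof of Theorem \ref{thm:extremality}, to the nonzero-degree condition on $f$ together with properness of the map.
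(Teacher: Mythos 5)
Your proposal contains a genuine gap at the step you yourself flag as a ``technical subtlety'': it is in fact the point where the argument collapses. If you twist by $E=f^{*}S_X$ with the pulled-back flat Levi--Civita connection, then indeed $\mathcal R^{E}\equiv 0$, but the index of the resulting operator is also zero. A flat bundle on $\mathbb T^{n-1}$ has trivial rational Chern character, and $\widehat A(\mathbb T^{n-1})=0$, so the index pairing $\langle\slashed D,[f^{*}S_X]\rangle$ vanishes and there is no nontrivial harmonic section from which to derive a contradiction. One cannot have both $\mathcal R^{E}=0$ and a nonzero index on the torus: any bundle $F$ with $\ind(\slashed D_F)\neq 0$ must have nontrivial top Chern character and hence cannot be flat. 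This is exactly the classical tension that the Gromov--Lawson ``almost flat bundle'' trick is designed to resolve, and it is what the paper does: it chooses a Hermitian bundle $F$ on $\mathbb T^{n-1}$ with $\ind(\slashed D_F)=1$, passes to $\Lambda$-fold covers $N_\Lambda$ so that the pulled-back bundle becomes arbitrarily flat, and obtains a curvature error term $C_\Lambda\to 0$ rather than an exactly vanishing one. Absorbing this nonzero error then requires an additional ingredient absent from your outline, namely the Poincar\'e-type inequality of Lemma \ref{lemma:poincare} with constants uniform in $\Lambda$ (using that the $\delta$-neighborhoods where $\Sc_{\overbar g}>f^{*}\Sc_g$ lift to $\Lambda$ copies covering the lifted compact set within a distance independent of $\Lambda$).

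Two smaller points. First, your observation that the area-non-increasing condition on $Pf_{*}$ is not needed in the torus case is correct and agrees with the paper: since $\Sc_{g_X}=0$, the leafwise Goette--Semmelmann estimate (Lemma \ref{lemma:curvature>=}) is not invoked, and only the $1$-Lipschitz condition on $f^{*}r$ enters the warping term. But the mechanism is ``curvature collected into the $C_\Lambda$ error,'' not ``curvature identically zero.'' Second, the paper must also handle the parity of $n-1$ (taking a product with $\sph^1$ when $n-1$ is odd, as in the punctured sphere argument), which your sketch does not address.
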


Recall that the $n$-dimensional standard round sphere with two antipodal punctures $(\sph^n\backslash\{\pm\}, g_{st})$ may be viewed as a warped product space
\[  g_{st} = dr^2 + \cos(r)^2 g^{\sph^{n-1}}_{st}, \]
 where $r\in (-\pi/2, \pi/2)$. It is easy to verify that the function $\varphi(r) = \cos(r)$ is admissible in the sense of Definition \ref{def:admissible}. As a special case of Theorem \ref{thm:extremality},  we have the following theorem, which generalizes the corresponding result of Gromov to all dimensions. 

\begin{theorem}\label{coro:sphere}
	Assume that $n\geq 3$. Let $M$ be the $n$-dimensional standard round sphere with a pair of antipodal points removed. Let $(N,\overbar g)$ be an open spin Riemannian manifold. Let $f\colon N\to M$ be a proper smooth map with non-zero degree. If $f$ is distance-non-increasing and $\Sc_{\overbar g}\geq n(n-1)$, then $f$ is an isometry.
\end{theorem}

We would like to mention that Theorem  \ref{coro:sphere} was also  obtained independently in a  preprint of B\"{a}r-Brendle-Hanke-Wang \cite{Baer:2023aa}.

So far, we have mainly focused on scalar rigidity results on bands that are degenerate at both ends. It is not difficult to see that our techniques can be adapted to prove the following scalar-and-mean curvature rigidity for warped product spaces that are degenerate at one end. 

\begin{theorem}\label{thm:scalar-mean-intro}
	Let $M=[-c,c)\times X$ and
	$$g=dr^2+\varphi(r)^2g_X$$
	a warped product metric on $M$ such that
	\begin{enumerate}[label=$(\arabic*)$]
		\item $\varphi$ is log-concave, that is, $\psi\coloneqq (\log\varphi)''\leq 0$,
		\item $\psi'(r)+n\psi(r)^2/2$ is non-decreasing near $r=c$,
		\item $\varphi(r)>0$ for $r\in [-c,c)$, and $\varphi(c)=0$, and
		\item $(X,g_X)$ is the Riemannian product of finitely many spaces from the classes \textup{(i)--(iii)} listed above.
	\end{enumerate}
	Let $(N,\overbar g)$ be a Riemannian manifold with boundary and $f\colon N\to M$ be a smooth spin proper map with non-zero degree. If $f$ is distance non-increasing, and the scalar curvature and the mean curvature satisfy
	$$\Sc_{\overbar g}\geq f^*\Sc_g,\text{ and }H_{\overbar g}\geq f^*H_g=-(n-1)\psi(-c),$$
	then $\Sc_{\overbar g}=f^*\Sc_g$ and $H_{\overbar g}= f^*H_g=\psi(c)$. Furthermore, the following hold. 
	\begin{enumerate}[label=$(\mathrm{\Roman*})$]
		\item If $\varphi$ is strictly log-concave, then $N=[-c,c)\times Y$ for some Riemannian manifold $(Y,g_Y)$ and the metric  $\overbar g=dr^2+\varphi(r)^2g_Y$, and the map $f$ respects  the product structures. 
		\item If $\varphi$ is strictly log-concave and the metric $g_X$ on the leaf $X$ has positive Ricci curvature, then $f$ is a local isometry.
	\end{enumerate}
\end{theorem}

The following warped metric
$$dr^2+\varphi(r)^2 g^{\sph^{n-1}_{st}},$$
with  $\varphi(r)$ equal to $r$, $\sin(r)$, or $\sinh(r)$,  represents the metric on the geodesic ball in the spaces form Euclidean space, standard round sphere, and hyperbolic space, respectively. It is easy to verify that all three  functions $r$, $\sin(r)$ and  $\sinh(r)$  are admissible in the sense of Definition \ref{def:admissible}.
As an immediate consequence of Theorem \ref{thm:scalar-mean-intro}, we have the following scalar-and-mean curvature  rigidity for geodesic balls in space forms.

\begin{theorem}\label{cor:geoball}
	Let $(M,g)$ be a geodesic ball in a space form.
	Let $(N,\overbar g)$ be a spin Riemannian manifold with boundary and $f\colon N\to M$ a smooth map such that 
	\begin{enumerate}[label=$(\arabic*)$]
		\item 
		$\Sc(\overbar{g})_x \geq \Sc(g)_{f(x)}$ for all $x\in N$, 
		\item $H_{\overbar{g}}(\partial N)_y \geq  H_{g}(\partial M)_{f(y)}$ for all $y\in \partial N$, 
		\item $f$ is distance-non-increasing on $N$, 
		\item the degree of $f$ is nonzero,	
	\end{enumerate} 
	then $f$ is an isometry. 
\end{theorem}
  The authors have previously proved the above theorem for geodesic balls in Euclidean space using a different method \cite[Theorem 1.7]{Wang:2022vf}. Interestingly, the approach presented in \cite{Wang:2022vf} shows that the above theorem is valid not only for geodesic balls but also for all strictly convex domains with smooth boundary in Euclidean space.  This raises a natural question: does the above scalar-and-mean curvature  rigidity theorem extend to strictly convex domains with smooth boundary in hyperbolic space?

This paper is organized as follows. In section \ref{sec:extrem}, we present some key estimates for Theorem \ref{thm:extremality}, with a specific focus on the case where the leaf $X$ has non-zero Euler characteristic. In  Section \ref{sec:sphere}, we prove the special case of Theorem \ref{thm:extremality} where the leaf $X$ is a standard round sphere. Consequently, we obtain the scalar curvature extremality and rigidity for standard round spheres with two antipodal punctures.   In Section \ref{sec:torus}, we prove the scalar curvature extremality and  rigidity for a class of degenerate toric bands.  The general case of Theorem \ref{thm:extremality} then easily follows from the proofs of the three special cases given in Sections \ref{sec:extrem}, \ref{sec:sphere} and \ref{sec:torus}. Additionally,  we give examples of degenerate toric bands to illustrate the necessity of condition (3) in Definition \ref{def:admissible}. Finally, in Section \ref{sec:scalar-mean}, we  prove the scalar-and-mean curvature rigidity for warped product spaces that are degenerate at one end. 

The authors would like to thank the anonymous referees for  helpful comments and particularly for pointing out some technical issues in an earlier version of the paper.

\section{Some estimates and a special case of Theorem \ref{thm:extremality} }\label{sec:extrem}

In this section, we prove some estimates that will be needed in the proof of Theorem \ref{thm:extremality}. In order to make our proof more transparent and also to highlight the subtleties of different cases, we first demonstrate how these estimates are applied in the special  case of Theorem \ref{thm:extremality} where the leaf  $X$ of $M$ is assumed to have non-zero Euler characteristic.  The general case of Theorem \ref{thm:extremality} requires some extra care. More precisely, we shall deal with the case where the leaf $X$ is an odd dimensional round sphere in Section \ref{sec:sphere}, and the case where the leaf $X$ is a flat torus in Section \ref{sec:torus}. Finally, the general case of Theorem \ref{thm:extremality} will be proved by a combination of the above three cases.

\subsection{Some estimates}\label{sec:estimates}
In this subsection, as a preparation, we first prove a series of estimates that will be needed later. Let us fix some notation. Let $\varphi$ be a log-concave positive function on $(-c,c)$ and $\psi=\varphi'/\varphi$.   We fix a closed sub-interval $I_0 = [-a,a]$ in $(-c,c)$ such that
\begin{itemize}
	\item $\varphi$ attains its maximum in the interior of $I_0$,
	\item $(\psi'+n\psi^2/2)'\geq 0$ on $(a, c)$,   and
	\item $(\psi'+n\psi^2/2)'\leq 0$ on $(-c, -a)$. 
\end{itemize}

We have  the following proposition.
\begin{proposition}\label{prop:special}
	Let $M=(-c,c)\times X$ be an $n$-dimensional manifold and
	$$g=dr^2+\varphi(r)^2g_X$$
	a warped product metric on $M$ such that
	\begin{enumerate}[label=$(\arabic*)$]
		\item $\varphi$ is admissible in the sense of Definition $\ref{def:admissible}$ and 
		\item $(X,g_X)$ is a closed Riemannian manifold with nonnegative curvature operator and non-zero Euler characteristic.
	\end{enumerate}
	Let $N$ be a (possibly incomplete) Riemannian manifold and $f\colon N\to M$  a smooth spin proper map with non-zero degree. Then there is no metric $\overbar g$ on $N$ such that
	\begin{itemize}
		\item $f\colon (N,\overbar g)\to (M,g)$ is distance non-increasing, 
		\item $\Sc_{\overbar g}\geq f^*\Sc_g$, and
		\item $\Sc_{\overbar g}>f^*\Sc_g+\varepsilon_0'$ on the preimage of the $\varepsilon_0$-neighborhood of $I_0\times X$ for some $\varepsilon_0>0$ and $\varepsilon_0'>0$.
	\end{itemize}
\end{proposition}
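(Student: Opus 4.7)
The strategy is a Callias-type twisted Dirac operator argument on $(N,\overbar g)$, designed so that the operator has nontrivial Fredholm index via the degree of $f$ and the topology of the fiber, yet trivial kernel via a Weitzenböck estimate exploiting admissibility condition (3). Depending on which of the cases (i)--(iv) the leaf $X$ belongs to, I first choose a $\mathbb{Z}/2$-graded twisting bundle $E_X\to X$ of Llarull/Goette--Semmelmann/flat type: its curvature endomorphism obeys the pointwise bound $|\mathcal R^{E_X}|\leq \Sc_X/4$, with equality realized on a parallel subspace accounting for a nonzero topological index ($\chi(X)$, or $\hat A$-contribution from sphere/torus factors for products). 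Pulling back gives $E=f^*\pi^*E_X$ on $N$, where $\pi\colon M\to X$ is the projection; I consider the twisted spin Dirac operator $D$ on $(N,\overbar g)$ with coefficients in $E$ and form the Callias-type operator
\[
\mathcal D \;=\; D + c(\nu)\,\Phi(f^*r),
\]
where $c(\nu)$ is an odd self-adjoint Clifford element anticommuting with $D$ and $\Phi\colon(-c,c)\to\mathbb{R}$ is a potential to be chosen.

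The Lichnerowicz--Weitzenböck identity combined with the hypotheses yields a pointwise bound. Using $\Sc_{\overbar g}\geq f^*\Sc_g$, the $1$-Lipschitz bound $|\nabla(f^*r)|\leq 1$, the Goette--Semmelmann curvature estimate $\mathcal R^E\geq -f^*(\Sc_X/(4\varphi^2))$ (which requires only the area-non-increasing property of the fiber map $Pf_*$), and the warped-product identity $\tfrac14\Sc_g=\tfrac{\Sc_X}{4\varphi^2}-\tfrac{n-1}{2}\psi'-\tfrac{n(n-1)}{4}\psi^2$, the $\Sc_X/(4\varphi^2)$ contributions cancel and one arrives at
\[
\mathcal D^2 \;\geq\; \nabla^*\nabla + \tfrac14(\Sc_{\overbar g}-f^*\Sc_g) + \Phi^2 - |\Phi'| - \tfrac{n-1}{2}\bigl(\psi' + \tfrac{n}{2}\psi^2\bigr).
\]
The next step is to choose $\Phi$ so that $\Phi^2-|\Phi'|\geq \tfrac{n-1}{2}(\psi'+\tfrac{n}{2}\psi^2)$ on $(-c,c)$. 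On $I_0$ the right-hand side is $\leq 0$ by definition of $I_0$, so $\Phi\equiv 0$ suffices there. Near each degenerate end, admissibility condition (3), namely the monotonicity of $\psi'+n\psi^2/2$ as $r\to \pm c$, enables the construction of $\Phi$ as a blow-up solution of the associated Riccati-type inequality with $|\Phi|\to\infty$ at the ends; a barrier argument then permits smooth matching across $\partial I_0$ with the zero solution.

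With such a $\Phi$, the estimate shows $\mathcal D^2\geq 0$ pointwise, with the strict bound $\mathcal D^2\geq \varepsilon_0'/4$ on the $\varepsilon_0$-neighborhood of $I_0\times X$ coming from the strict scalar-curvature hypothesis. Because $|\Phi|\to\infty$ at the ends, $\mathcal D$ is essentially self-adjoint and Fredholm as a Callias-type operator; integration by parts, together with the strict positivity on a nonempty open set, forces $\ker\mathcal D=\{0\}$. On the other hand, the Callias-type index theorem computes $\operatorname{ind}\mathcal D=\pm\deg(f)\cdot\operatorname{ind}(D_X^{E_X})\neq 0$, using the nonzero degree of $f$ and the nonvanishing fiber index; this contradicts the vanishing of $\ker\mathcal D$ and completes the proof. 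The main obstacle is the construction and smooth matching of $\Phi$: admissibility condition (3) is crafted precisely so that the Riccati-type inequality $\Phi^2-|\Phi'|\geq \tfrac{n-1}{2}(\psi'+n\psi^2/2)$ admits a smooth solution that vanishes on $I_0$ and blows up at both degenerate ends, simultaneously preserving the strict positivity from the scalar-curvature hypothesis and ensuring Fredholmness of the Callias operator.
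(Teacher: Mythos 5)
Your strategy (a Callias-type operator on the open manifold, twisted by a bundle pulled back from the fiber) is genuinely different from the paper's (which exhausts $N$ by compact pieces $N_\lambda$ with a local boundary condition $B$, twists by the full bundle $S(TN_\lambda\oplus h^*TM_\mu)$, and pre-composes with a stretching map $h_\chi$). Unfortunately there is a quantitative gap that kills the argument at its core: your Weitzenb\"ock estimate carries the plain Lichnerowicz coefficient $\tfrac14$ in front of $\Sc_{\overbar g}-f^*\Sc_g$, whereas the paper's proof hinges on the Penrose-operator identity \eqref{eq:penrose}, which upgrades this coefficient to $\tfrac{n}{4(n-1)}$. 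With the improved coefficient the warping terms to be absorbed are $\tfrac n2\psi'+\tfrac{n^2}{4}\psi^2$, and these are cancelled \emph{exactly} (with no slack) by $[\tdirac,\Psi]+\Psi^2$ for the potential $\Psi=\tfrac n2\psi\,\mathscr E c(\partial_r)$. With your coefficient $\tfrac14$ the terms to be absorbed become $\tfrac{n-1}{2}\psi'+\tfrac{n(n-1)}{4}\psi^2$, and your required Riccati inequality $\Phi^2-|\Phi'|\geq\tfrac{n-1}{2}\psi'+\tfrac{n(n-1)}{4}\psi^2$ is strictly harder than what any potential can deliver. Concretely, take $n=3$, $X=\sph^2$, $\varphi=\cos r$ (i.e.\ $M=\sph^3\setminus\{\pm\}$, squarely within the hypotheses): the inequality reads $\Phi^2-|\Phi'|\geq \tfrac12\tan^2 r-1$ on $(-\pi/2,\pi/2)$, and the Callias index forces $\Phi$ to change sign. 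Since $\Phi^2\geq\tfrac12\tan^2r-1>0$ for $\tan^2r>2$, the last zero $r_{**}$ of $\Phi$ lies in the middle region; on $[r_{**},\pi/2)$ one has $\Phi'\geq -\Phi^2+\tfrac12\tan^2r-1\geq-\Phi^2-1$, so comparison with $\theta'=-\theta^2-1$ gives $\Phi(r)\geq-\tan(r-r_{**})$, which is bounded on $[r_{**},\pi/2)$ unless $r_{**}\leq0$; symmetrically the first zero is $\geq 0$, so $\Phi$ vanishes only at $0$ and $-\tan r\leq\Phi\leq0$ on $[0,\pi/2)$. Then $G:=\Phi+\tan r$ satisfies $G'\geq-\Phi^2+\tfrac32\tan^2 r\geq\tfrac12\tan^2r$, whence $|\Phi|\leq\tfrac12\tan r+\tfrac r2$, contradicting $\Phi^2\geq\tfrac12\tan^2r-1$ near $\pi/2$. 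So no admissible $\Phi$ exists, and your pointwise bound $\mathcal D^2\geq0$ cannot be achieved by this route. This is not a fixable detail of the "barrier argument": the sharpness of the statement forces the Friedrich-type refinement.

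Two further points. First, you have misidentified the role of admissibility condition (3): in the paper it is not used to solve a Riccati inequality, but to compare $\psi'+\tfrac n2\psi^2$ at the stretched point $\chi(r)$ (which lies closer to $\pm c$) with its value at $r$, which is needed because the auxiliary map $h_\chi$ pushes the comparison target outward; the Riccati-type cancellation itself is automatic once the Penrose improvement is in place. Second, your Fredholmness and essential self-adjointness claims for $\mathcal D$ are asserted rather than proved: $N$ is allowed to be incomplete, and a Dirac operator with a blowing-up potential on an incomplete manifold is not automatically essentially self-adjoint -- one needs the potential to dominate near the metric boundary in a quantified way. The paper sidesteps this entirely by working on the compact pieces $N_\lambda$ with the local boundary condition $B$, where the index equals $\deg(f)\chi(X)$ by standard theory, and by showing the boundary integrand $\tfrac{n}{2(n-1)}H_{\overbar g}+\tfrac n2|\psi(h^*r)|$ tends to $+\infty$ as $\lambda\to c$, so that a fixed $\lambda$ with positive boundary term suffices. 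If you want to salvage your framework, you must (i) replace the Lichnerowicz estimate by the Penrose/Friedrich refinement so that $\Phi=\tfrac n2\psi$ works, and (ii) supply an actual Fredholm theory (or revert to compact exhaustion with boundary conditions) on the incomplete manifold.
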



For each $0< \lambda < c$, we denote by $(M_\lambda,g) = ( [-\lambda, \lambda] \times X, g)$ in $M$. In general, $f^{-1}(X_\lambda)$ may not be a submanifold of $N$. But by Sard's theorem and the transversality theory, there exists  a sequence of positive numbers $\{\lambda_i\}$ with  $0<\lambda_i<c$ and $\lambda_i  \to c$ as $i\to \infty$ such that $f^{-1}(X_{\lambda_i})$ is a submanifold of $N$. Similarly,  there exists  a sequence of positive numbers $\{\lambda'_i\}$ with  $0<\lambda'_i<c$ and $-\lambda_i  \to -c$ as $i\to \infty$ such that $f^{-1}(X_{-\lambda'_i})$ is a submanifold of $N$. 

Precisely speaking,  we should work with the submanifolds $[-\lambda'_i, \lambda_i] \times X$ of $M$. But in order to avoid overload of notation, let us  assume without loss of generality that $\lambda_i = \lambda'_i$. Now let us choose $\lambda$ to be one of the $\lambda_i$'s.  In particular,   $N_\lambda=f^{-1}(M_\lambda)$ is a smooth manifold with boundary, and the map $f\colon N\to M$ restricts to a smooth spin map $f\colon N_\lambda\to M_\lambda$ that maps boundary to boundary. It is clear that the degree of $f\colon N_\lambda\to M_\lambda$ equals to the degree of $f\colon N\to M$.

For any $\varepsilon',\varepsilon>0$, there exist $0< \gamma<c$ and a smooth function
\begin{equation}\label{eq:expand}
	\expand\colon [-\gamma,\gamma]\to [-c,c]
\end{equation}
such that
\begin{itemize}
	\item $\expand(\pm\gamma)=\pm c$,
	\item $1\leq \expand'(r)\leq 1+\varepsilon'$ for $r\in \mathcal N_{\varepsilon}(I_0)$, where $\mathcal N_{\varepsilon}(I_0)$ is  the $\varepsilon$-neighborhood of $I_0$, and
	\item $\expand'(r)=1$ for $r\in [-\gamma,\gamma]\backslash \mathcal N_{\varepsilon}(I_0)$.
\end{itemize}

By construction, if we fix $\varepsilon$ and $\varepsilon'$, then $|\expand(r)-r|$ is a positive constant for all $r$ sufficiently close to $\pm\gamma$. We denote this positive constant by  $\kappa(\varepsilon,\varepsilon')$.

For any $\lambda\in[0,\gamma]$ and $\mu=\expand(\lambda)$, we define
\begin{equation}\label{eq:hchi}
		h_\expand\colon (M_\lambda,g)\to (M_\mu,g),~(r,x)\mapsto (\expand(r),x)
\end{equation}
for $r\in[-\lambda,\lambda]$ and $x\in X$. Note that $\|dh_\expand\|\leq 1+\varepsilon$ and $h_\expand$ maps  the leaf $X_r$ to the leaf  $X_{\expand(r)}$.

We shall Proposition \ref{prop:special} by contradiction. Suppose a metric $\overbar g$ on $N$ as described in Proposition \ref{prop:special} exists. Let us denote  
$$h\coloneqq h_\expand\circ f\colon (N_\lambda,\overbar g)\to (M_\mu,g),$$
where the constants  $\varepsilon,\varepsilon',\lambda,\mu$ appearing in the construction of the function  $\expand$ will be specified later. Set $E=S(TN_\lambda\oplus h^*TM_\mu)$ to be the spinor bundle of $TN_\lambda\oplus h^*TM_\mu$ over $N_\lambda$, which exists since $f$ is assumed to be a spin map. The Clifford actions of $TN_\lambda$ and $h^*TM_\mu$ on $E$ are denoted by $\overbar c$ and $c$, respectively. Let $\mathscr E$ be the grading operator on $E$.

Let $\partial_r$ be the unit vector in $h^*TM_\mu$ along the $r$ direction. Let $\nabla$ be the spinorial connection on $E$ naturally induced by  the Levi--Civita connection on $N$ and the pull-back of the Levi--Civita connection on $M$. We define a new connection on $E$  by
\begin{equation}\label{eq:tcon}
	\widehat\nabla_\xi\coloneqq \nabla_\xi+\frac 1 2 c(\nabla^g_{h_*\xi}\partial_r)c(\partial_r),
\end{equation}
where $\nabla^g$ is the Levi--Civita connection of $(M,g)$. A straightforward computation shows that $c(\partial_r)$ is parallel with respect to $\widehat\nabla$, that is, $\tcon c(\partial_r)=0$. 

Let $\widehat D$ be the Dirac operator on $E$ with respect to $\widehat\nabla$,
$$\widehat D=\sum_{i=1}^n \overbar c(\overbar e_i)\widehat \nabla_{\overbar e_i}$$
where $\{\overbar e_i\}_{1\leq i\leq n}$ is  local orthonormal basis of $TN_\lambda$.

Recall that we have 
\begin{equation}
	\psi=\frac{\varphi'}{\varphi}=(\log\varphi)'.
\end{equation}
From now on, we denote by $r\colon M = (-c, c) \times X \to (-c, c)$ the projection to the first component, that is, $r$ maps the leaf $X_t$ to $t$.
We set 
\begin{equation}
	\Psi\coloneqq \frac{n}{2}\cdot  \psi(h^\ast r) \cdot \mathscr E \cdot c(\partial_r), 
\end{equation}
where $h^\ast r$ is the function $r\circ h \colon N_\lambda \to [-\mu, \mu]$,   
and define 
\begin{equation}
	\widehat D_\Psi=\widehat D+\Psi.
\end{equation}
In the following, we shall consider the Fredholm index of $\widehat D_\Psi$ subject to an appropriate local boundary condition. 
\begin{definition}\label{def:boundaryCondition}
		A section $\sigma$ of $E$ over $N_\lambda$ is said to satisfy the local boundary condition $B$  if 
		$$\mathscr E\overbar c(\overbar \nu) c(\mp\partial_r)\sigma=-\sigma,$$
		on $\partial N_\lambda$, where $\overbar \nu$ is the unit inner normal vectors of $\partial N_\lambda$, and $-\partial_r$ (reps. $\partial_r$) is the unit inner normal vector field of $X_{\mu}$ (resp. $X_{-\mu}$).
\end{definition}

First we prove some key estimates. Let  $\mathcal P\colon C^\infty(N_\lambda,E)\to C^\infty(N_\lambda,T^*N_\lambda\otimes E)$ be the Penrose operator defined by
\begin{equation}\label{eq:penrosedef}
	\mathcal P_{\xi}\sigma\coloneqq \widehat\nabla_{\xi}\sigma+\frac{1}{n}\overbar c(\xi)\widehat D\sigma
\end{equation}
for all $\xi\in TN_\lambda$ and all $\sigma\in C^\infty(N_\lambda,E)$.
We have the following identity (cf. \cite[Section 5.2]{spinorialapproach}):
\begin{equation}\label{eq:penrose}
	|\widehat \nabla\sigma|^2=|\mathcal P\sigma|^2+\frac 1 n|\widehat D \sigma|^2
\end{equation}
all $\sigma\in C^\infty(N_\lambda,E)$.

Let $\sigma\in C^\infty(N_\lambda,E)$ be a smooth section of $E$ satisfying the boundary condition $B$ as given in Definition \ref{def:boundaryCondition}.
By the definition of $\widehat D_\Psi$, we have the pointwise equality
\begin{equation}\label{eq:Bsquare}
	\langle \tdirac_\Psi\sigma, \tdirac_\Psi \sigma\rangle
	=|\tdirac\sigma|^2+\langle \Psi\sigma,\tdirac\sigma\rangle+\langle\tdirac\sigma, \Psi\sigma\rangle+\big(\frac{n}{2}\psi(h^*r)\big)^2 |\sigma|^2.
\end{equation}
 over $N_\lambda$. 
By the Stokes formula, we have
\begin{equation}\label{eq:StokesDsquare}
	\int_{N_\lambda} |\tdirac\sigma|^2=\int_{N_\lambda} \langle\tdirac^2\sigma,\sigma\rangle+\int_{\partial N_\lambda} \langle\overbar c(\overbar \nu) \tdirac\sigma,\sigma\rangle
\end{equation}
where $\overbar \nu$ is the inner unit normal vector of $\partial N_\lambda$.
Note that 
\begin{equation}\label{eq:Lichne}
	\tdirac^2=\tcon^*\tcon+\mathcal R, 
\end{equation}
where $\mathcal R$ is the curvature endomorphism of $E$ with respect to $\tcon$. We shall estimate $\mathcal R$ in Lemma \ref{lemma:curvature>=}. Before that, let us observe that 
\begin{equation}\label{eq:Stokesnabla}
	\int_{N_\lambda}\langle \tcon^*\tcon\sigma,\sigma\rangle=\int_{N_\lambda}| \tcon \sigma|^2 +\int_{\partial N_\lambda}\langle\tcon_{\overbar \nu}\sigma,\sigma\rangle
\end{equation} 
again by  the Stokes formula. 

By combining line \eqref{eq:StokesDsquare}, \eqref{eq:Lichne}, \eqref{eq:Stokesnabla} and \eqref{eq:penrose}, we obtain that
\begin{equation}\label{eq:D^2}
	\begin{split}
		\int_{N_\lambda} |\tdirac\sigma|^2=&\frac{n}{n-1}\int_{N_\lambda} |\mathcal P\sigma|^2+\frac{n}{n-1}\int_{N_\lambda}\langle\mathcal R\sigma,\sigma\rangle\\
		&+\frac{n}{n-1}\int_{\partial N_\lambda}\langle (\overbar c(\overbar \nu)\tdirac+\tcon_{\overbar \nu})\sigma,\sigma\rangle
	\end{split}
\end{equation}

We have the following estimate for the term $\mathcal R$ in line \eqref{eq:D^2} above.

\begin{lemma}\label{lemma:curvature>=}
	If the curvature operator of $(X, g_X)$ is non-negative, then
	\begin{equation}\label{eq:curvature>=}
		\mathcal R\geq \frac{\Sc_{\overbar g}}{4}-\frac{f^*\Sc_{g_X}}{4\varphi(f^*r)^2}.
	\end{equation}
\end{lemma}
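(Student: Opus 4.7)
The plan is to apply the Bochner--Lichnerowicz decomposition of $\widehat D^2$ and then control the resulting twist-curvature term using the Goette--Semmelmann inequality, which leverages the non-negativity of the curvature operator on $(X,g_X)$.

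First, view $E=S(TN_\lambda\oplus h^*TM_\mu)$ as the spinor bundle of $TN_\lambda$ twisted by the pullback of the spinor bundle of $TM_\mu$. Under this viewpoint $\widehat D$ is a twisted Dirac operator. By the construction of $\widehat\nabla$ recalled in the excerpt, the induced connection on the twisting factor $h^*TM_\mu$ is the pullback of the \emph{product} connection associated to the splitting $TM_\mu=T\mathbb R\oplus TX$: trivial along $\partial_r$ and equal to the Levi--Civita connection of $(X,g_X)$ in the $X$-directions, without the warping Christoffel symbols. The Lichnerowicz--Weitzenb\"ock formula then identifies the curvature endomorphism as
\[ \mathcal R = \frac{\Sc_{\overbar g}}{4} + \mathcal F, \]
where $\mathcal F$ is the Clifford-curvature endomorphism induced by the twist bundle.

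Because the product connection is curvature-free on any plane containing $\partial_r$, only the $X$-part contributes:
\[ \mathcal F = \tfrac{1}{2}\sum_{i<j,\,\alpha<\beta}\overbar c(\overbar e_i)\overbar c(\overbar e_j)\,\langle R^{h^*TX}(\overbar e_i,\overbar e_j)\tilde e_\alpha,\tilde e_\beta\rangle\,c(\tilde e_\alpha)c(\tilde e_\beta), \]
where $\{\overbar e_i\}$ is a local $\overbar g$-orthonormal frame of $TN_\lambda$ and $\{\tilde e_\alpha\}$ is a local frame of $h^*TX$ orthonormal with respect to the scaled fiber metric $\varphi(f^*r)^2 g_X$. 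Since $(X,g_X)$ has non-negative curvature operator, so does the pulled-back bundle $(h^*TX,\varphi^2 g_X)$ pointwise. The Goette--Semmelmann estimate for Clifford-curvature endomorphisms associated to bundles with non-negative curvature operator, combined with the area-non-increasing property of $h$ in the $X$-directions (a consequence of $f$ being distance non-increasing and of the construction of $\chi$), then yields
\[ \mathcal F \;\geq\; -\tfrac{1}{4}\,\Sc_{\varphi^2 g_X} \;=\; -\frac{f^*\Sc_{g_X}}{4\,\varphi(f^*r)^2}, \]
where the last equality uses the conformal scaling $\Sc_{c^2 g}=c^{-2}\Sc_g$ applied fiberwise. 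Combining with the Lichnerowicz identity gives the claimed inequality.

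The main obstacle is the last step: adapting the Goette--Semmelmann estimate to the warped-product setting with the correct constant. One must carefully track the $\varphi^{-1}$ factors entering because Clifford generators on $h^*TX$ are normalized with respect to $\varphi^2 g_X$, and verify that area-non-increasing in the $X$-directions (rather than a stronger Lipschitz bound) already suffices for the sharp constant. The identity $\widehat\nabla c(\partial_r)=0$ from the construction of $\widehat\nabla$ is essential here, as it decouples the radial from the transverse directions and reduces $\mathcal F$ to the pure $X$-curvature contribution.
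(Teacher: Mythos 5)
Your proposal is correct and takes essentially the same approach as the paper: the paper's proof is exactly the Goette--Semmelmann estimate carried out explicitly, writing the non-negative leafwise curvature operator as $L^2$, completing the square with a skew-symmetric element weighted by $\alpha = \varphi(h^*r)/\varphi(f^*r)$, and using the singular value decomposition of $Ph_*$ (singular values $\le \alpha$) so that the powers of $\alpha$ cancel against the $\varphi(h^*r)^{-2}$ normalization of the target curvature to yield the source-level bound $-f^*\Sc_{g_X}/(4\varphi(f^*r)^2)$. The bookkeeping you flag as the remaining obstacle is precisely what that computation resolves.
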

\begin{proof}
	For $2$-forms of $N$, we define the Clifford multiplication by
	\begin{equation}\label{eq:c(2-form)}
		\overbar c(\overbar e_i\wedge \overbar e_j)=\overbar c(\overbar e_i)\overbar c(\overbar e_j),
	\end{equation}
	where $\overbar e_i, \overbar e_j \in TN$ are orthogonal. The Clifford  multiplication $c(w)$ for a $2$-form $w$ over $M$ is defined similarly. 
	
	Let $P$ be the orthogonal projection from $TM$ to $TX$.	By the Bochner--Lichnerowicz--Weitzenb\"ock formula, we have
	\begin{equation}\label{eq:BLW}
		\mathcal R =\frac{\Sc_{\overbar g}}{4}-\frac{1}{2}\sum_{i,j}\langle \widehat R (Ph_*\overbar w_j),w_i\rangle_M \, \overbar c(\overbar w_j)\otimes c(w_i),
	\end{equation}
	where $\{\overbar w_j\}$ is a local orthonormal basis of $2$-forms on $N$, and $\{w_i\}$ is a local orthonormal basis of leaf-wise $2$-forms on $M$, and $\widehat R=\varphi^{-2}R_X$ is the leaf-wise curvature operator of $M$, i.e., $\widehat R=\varphi^{-2}R_X$ is  the curvature operator of $(X, \varphi^2 g_X)$. As the curvature operator $\widehat R$ is non-negative along each leaf, there exists a self-adjoint $L\in \Endo(\Bigwedge^2 TX)$  such that $\widehat R=L^2$, that is,
	$\langle \widehat R  w_j,w_i\rangle_{M}=\langle L  w_j,L w_i\rangle_{M}.$
	
	Set $$\overbar L w_k\coloneqq \sum_i\langle L w_k,Ph_*\overbar w_i\rangle_M\overbar w_i\in\Bigwedge^2 TN$$
	and
	$$\alpha=\frac{\varphi(h^*r)}{\varphi(f^*r)}.$$
	The second term on the right hand side  of \eqref{eq:BLW} can be written as
	\begin{align*}
		&-\frac{1}{2}\sum_{i,j}\langle \widehat RPh_*\overbar w_j,w_i\rangle_{M} \overbar c(\overbar w_j)\otimes c(w_i)\\
		=&-\frac{1}{2}\sum_{i,j,k}\langle L(Ph_*\overbar w_j),w_k\rangle_M \cdot 
		\langle Lw_i,w_k\rangle_M \cdot  \overbar c(\overbar w_j)\otimes c(w_i)\\
		=&-\frac{1}{2}\sum_{k}\overbar c(\overbar Lw_k)\otimes c(L w_k)\\
		=&\frac{1}{4}\sum_k\Big(\alpha^{-2}\overbar c(\overbar Lw_k)^2\otimes 1+\alpha^{2}\otimes c(L w_k)^2-\big(\alpha^{-1}\overbar c(\overbar Lw_k)\otimes 1+\alpha\otimes  c(L w_k)\big)^2\Big)\\
		\geq&\frac 1 4 \sum_k \alpha^{-2} \overbar c(\overbar Lw_k)^{2}\otimes 1+\frac{1}{4}\sum_k \alpha^{2}\otimes c(L w_k)^2, 
	\end{align*}
	where the last inequality follows from the fact  that the element
	\[ \alpha^{-1}\overbar c(\overbar Lw_k)\otimes 1+\alpha\otimes  c(L w_k)\] is skew-symmetric, hence its square is non-positive.

	The same proof for the Lichnerowicz formula (cf. \cite[Theorem II.8.8]{spingeometry}) shows that
	$$\alpha^2\sum_k  c(L w_k)^2=-  \alpha^2\frac{h^*\Sc_{\varphi^2g_X}}{2}= - \alpha^2\frac{h^*\Sc_{g_X}}{2 \varphi(h^*r)^2}= - \frac{f^*\Sc_{g_X}}{2\varphi(f^*r)^2},$$
	where by construction we have $f^*\Sc_{g_X}=h^*\Sc_{g_X}$.
	Similarly, by the definition of $\overbar L$, we have
	\begin{align*}
		\sum_k\overbar c(\overbar Lw_k)^2 & =  \sum_{i,j,k}\langle Lw_k, Ph_*\overbar w_i\rangle_M \cdot 
		\langle Lw_k, Ph_*\overbar w_j\rangle_M \cdot \overbar c(\overbar w_i)\otimes c(\overbar w_j) \\
		& =\sum_{i,j}\langle \widehat R(Ph_*\overbar w_i),Ph_*\overbar w_j \rangle_M \cdot \overbar c(\overbar w_i)\overbar c(\overbar w_j).
	\end{align*}
	We choose a local $\overbar g$-orthonormal frame $\overbar e_1,\ldots,\overbar e_n$ of $TN_\lambda$ and a local $g$-orthonormal frame $e_1,\ldots,e_n$ of $TM_\mu$ such that $Ph_*\overbar e_i=\mu_i e_i$ with $\mu_i\geq 0$. This can be seen from the singular value decomposition of the map $Ph_*$. Then we have 
	$Ph_*(\overbar e_i\wedge\overbar e_j)=\mu_i\mu_j e_i\wedge e_j$.
	As $f$ is distance-non-increasing, it follows from the construction of the map $h$ that leafwise $\|dh\|\leq \alpha$. In particular, we have
	$0\leq\mu_i\leq \alpha$ for each $i$. Therefore
	\begin{equation}\label{eq:Rijij}
		\alpha^{-2}\sum_k \overbar c(\overbar Lw_k)^2=- \alpha^{-2}\sum_{i<j}\mu_i^2\mu_j^2(h^\ast  \widehat R_{ijji})\geq -\frac{f^*\Sc_{g_X}}{2\varphi(f^*r)^2}.
	\end{equation}
	This finishes the proof.
\end{proof}

\begin{remark}\label{rk:area}
	To deduce Lemma \ref{lemma:curvature>=}, one may relax the condition that $f$ is distance-non-increasing to that $Pf_*\colon TN\to TX$ is area-non-increasing. Indeed, in this case, the singular value decomposition of $Ph_\ast$ in the proof of Lemma \ref{lemma:curvature>=} implies  that $0\leq \mu_i\mu_j\leq \alpha^2$ for each $i<j$. As a consequence, we see that the inequality in line \eqref{eq:Rijij} still holds.
\end{remark}

Since $\sigma$ satisfies the boundary condition $B$, by using the fact that $c(\nu)=\pm c(\partial_r)$ is parallel with respect to $\tcon$, a standard computation shows that 
\begin{equation}\label{eq:A>=}
	\int_{\partial N_\lambda}\langle (\overbar c(\overbar \nu)\tdirac+\tcon_{\overbar \nu})\sigma,\sigma\rangle=\frac 1 2
	\int_{\partial N_\lambda}\langle H_{\overbar g}\sigma,\sigma\rangle,
\end{equation}
where $H_{\overbar g}$ is the mean curvature of $\partial N_\lambda$, cf. \cite[Lemma 2.9]{Wang:2023warp}. 

To summarize, we have
\begin{equation}\label{eq:D^2-2}
		\begin{split}
		\int_{N_\lambda} |\tdirac\sigma|^2\geq &\frac{n}{n-1}\int_{N_\lambda} |\mathcal P\sigma|^2+
		\frac{n}{n-1}\int_{N_\lambda}\Big(\frac{\Sc_{\overbar g}}{4}-\frac{f^*\Sc_{g_X}}{4\varphi(f^*r)^2}\Big)|\sigma|^2\\
		&+\frac{n}{n-1}\int_{\partial N_\lambda}\frac{H_{\overbar g}}{2}|\sigma|^2
	\end{split}
\end{equation}

Now we consider the second and third terms on the right hand side of the equation from  line \eqref{eq:Bsquare}. By the Stokes formula, we have
\begin{equation}\label{eq:2and3}
	\begin{split}
		&\int_{N_\lambda} \langle \Psi \sigma,\tdirac\sigma\rangle+\langle\tdirac\sigma,\Psi\sigma\rangle\\
		=&\int_{N_\lambda} \langle\tdirac \Psi \sigma,\sigma\rangle+
		\langle\Psi \tdirac\sigma,\sigma\rangle+
		\int_{\partial N_\lambda} \langle\overbar c(\overbar \nu) \Psi \sigma,\sigma\rangle\\
		=&\int_{N_\lambda} \langle [\tdirac,\Psi] \sigma,\sigma\rangle
		+\int_{\partial N_\lambda} \langle\overbar c(\overbar \nu) \Psi\sigma,\sigma\rangle.
	\end{split}
\end{equation}
Note that
\begin{equation}\label{eq:grad}
	\begin{split}
	[\tdirac,\Psi]=&\frac{n}{2}\overbar c\big(\grad_{\overbar g}(\psi(h^*r))\big) \cdot \mathscr E  c(\partial_r)
	=  \frac{n}{2} \psi'(h^*r) \cdot \overbar c\big(\grad_{\overbar g}(h^*r)\big)\cdot \mathscr Ec(\partial_r)\\
	\geq &\frac{n}{2} \psi'(h^*r)\cdot |\grad_{\overbar g}(h^*r)| \\
	= & \frac{n}{2} \psi'(h^*r)  \rho'(f^*r) |\grad_{\overbar g}(f^*r)|\geq \frac{n}{2} \psi'(h^*r) \rho'(f^*r)
\end{split}
\end{equation}
Here we have used the assumptions that $\psi'=(\log\varphi)''\leq 0$ and $f$ is distance-non-increasing.

For the boundary term in line \eqref{eq:2and3}, we have
\begin{equation*}
	\langle\overbar c(\overbar \nu) \Psi\sigma,\sigma\rangle
	=-\frac{n}{2} \psi(h^*r)\langle\mathscr E\overbar c(\overbar \nu)  c(\partial_r)\sigma,\sigma\rangle.
\end{equation*}
Note that $\log\varphi(r) \to -\infty$ as $r\to \pm c$. Since $\psi'=(\log\varphi)''\leq 0$ and the domain of $\psi$ is a bounded interval, it follows that 
\begin{equation}\label{eq:meanlarge}
\lim _{r\to \pm  c} \psi(r)= \lim _{r\to \pm  c} (\log \varphi)'(r) = \mp\infty.
\end{equation} 
By construction, $h^\ast r = \pm \mu$  on the components of $\partial N_\lambda$.  Consequently, when $\mu$ is sufficiently close to $c$, we have 
\begin{equation}\label{eq:boundary}
	\langle\overbar c(\overbar \nu) \Psi\sigma,\sigma\rangle=
	\frac{n}{2}|\psi(h^\ast r)|\cdot |\sigma|^2
\end{equation}
on all components of $\partial N_\lambda$, since $\sigma$ satisfies the boundary condition $B$ given in  Definition \ref{def:boundaryCondition}.

\begin{proposition}\label{prop:D^2}
	With the notation above, there is some $c_0>0$
	\begin{equation}\label{eq:D^2all}
		\begin{split}
			\|\tdirac_\Psi\sigma\|^2\geq&\frac{n}{n-1}\int_{N_\lambda} |\mathcal P\sigma|^2+\frac{n}{4(n-1)}\int_{N_\lambda}(\Sc_{\overbar g}-f^*\Sc_{g})|\sigma|^2\\
			&-\frac{\varepsilon' n}{2}\int_{f^{-1}(\mathcal N_\varepsilon(I_0)\times X)}(|\psi'(h^*r)|+c_0)|\sigma|^2\\
			&+\int_{\partial N_\lambda}\big(\frac{n}{2(n-1)}H_{\overbar g}+
			\frac{n}{2}|\psi(h^*r)|\big)|\sigma|^2
		\end{split}
	\end{equation}
	for any smooth section $\sigma$ of $E$ over $N_\lambda$ satisfying the boundary condition $B$, where $c_0$ is independent of $\varepsilon,\varepsilon'$ and $\lambda$.
\end{proposition}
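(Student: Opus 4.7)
The plan is to establish \eqref{eq:D^2all} by expanding the pointwise square
$$|\tdirac_\Psi\sigma|^2 = |\tdirac\sigma|^2 + \langle \Psi\sigma,\tdirac\sigma\rangle + \langle \tdirac\sigma,\Psi\sigma\rangle + \Big(\frac{n}{2}\psi(h^*r)\Big)^2|\sigma|^2$$
as in \eqref{eq:Bsquare}, integrating over $N_\lambda$, and then treating each of the three nontrivial contributions (the Dirac square, the cross term, and the potential square) separately by an integration by parts compatible with the boundary condition $B$. The goal is to absorb every term into a Bochner--Lichnerowicz style inequality in which all pointwise expressions can be matched against the warped-product scalar curvature of $(M,g)$.

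For the Dirac square I would apply Stokes' theorem, substitute the Lichnerowicz formula \eqref{eq:Lichne}, and then apply Stokes again to $\int\langle\tcon^*\tcon\sigma,\sigma\rangle$. Combining these steps with the Penrose identity \eqref{eq:penrose} to eliminate $|\tcon\sigma|^2$ produces the factor $n/(n-1)$ in front of both $|\mathcal P\sigma|^2$ and the curvature term, yielding \eqref{eq:D^2}. The curvature term is then bounded from below by Lemma \ref{lemma:curvature>=}, giving $(\Sc_{\overbar g} - f^*\Sc_{g_X}/\varphi(f^*r)^2)/4$, while the boundary identity \eqref{eq:A>=} converts the boundary contribution into the mean curvature term $\tfrac{1}{2}\int_{\partial N_\lambda}H_{\overbar g}|\sigma|^2$.

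For the cross term, a single application of Stokes' theorem rewrites it as $\int\langle[\tdirac,\Psi]\sigma,\sigma\rangle + \int_{\partial N_\lambda}\langle\overbar c(\overbar\nu)\Psi\sigma,\sigma\rangle$. Since $\tcon c(\partial_r)=0$ and $\mathscr E$ is parallel, the commutator simplifies to $\tfrac{n}{2}\psi'(h^*r)\overbar c(\grad_{\overbar g}(h^*r))\mathscr Ec(\partial_r)$; because $\psi'\le 0$ and $f$ is distance non-increasing, this is bounded below by $\tfrac{n}{2}\psi'(h^*r)\chi'(f^*r)$, as in \eqref{eq:grad}. At the boundary, $\partial_r=\mp\nu$ near $r=\pm\mu$, so the boundary condition $B$ combined with the blow-up $\psi(\pm c)=\mp\infty$ forces $\langle\overbar c(\overbar\nu)\Psi\sigma,\sigma\rangle=\tfrac{n}{2}|\psi(h^*r)|\cdot|\sigma|^2$ once $\mu$ is sufficiently close to $c$, yielding a strictly positive boundary contribution.

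To close the argument I would assemble these inequalities and invoke the warped-product scalar curvature identity $\tfrac{n}{4(n-1)}\Sc_g = \tfrac{n}{4(n-1)}\tfrac{\Sc_{g_X}}{\varphi^2} - \tfrac{n}{2}\psi' - \tfrac{n^2}{4}\psi^2$. This reduces the remaining algebra to comparing $\psi'(h^*r) + \tfrac{n}{2}\psi(h^*r)^2$ with $\psi'(f^*r) + \tfrac{n}{2}\psi(f^*r)^2$, and since $\chi(r)$ is at least as close to $\pm c$ as $r$, assumption (3) in Definition \ref{def:admissible} delivers exactly the required pointwise inequality. The only slack comes from the factor $\chi'(f^*r)$ in the commutator lower bound: it equals $1$ off $\mathcal N_\varepsilon(I_0)$ and is at most $1+\varepsilon'$ inside, which produces precisely the localized error term in \eqref{eq:D^2all}. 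The main obstacle is keeping this $\chi'$-error both localized to $\mathcal N_\varepsilon(I_0)$ and linear in $\varepsilon'$, so that in the eventual application to Proposition \ref{prop:special} it can be absorbed by the strict scalar curvature gap assumed on the preimage of $\mathcal N_{\varepsilon_0}(I_0)\times X$; balancing this error against the monotonicity built into admissibility is exactly what pins down the hypotheses on $\varphi$.
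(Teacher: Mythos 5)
Your proposal is correct and follows essentially the same route as the paper: expanding $|\tdirac_\Psi\sigma|^2$ as in \eqref{eq:Bsquare}, combining Stokes, the Lichnerowicz formula and the Penrose identity to get \eqref{eq:D^2}, invoking Lemma \ref{lemma:curvature>=} and the boundary identity \eqref{eq:A>=}, handling the cross term via \eqref{eq:2and3}--\eqref{eq:boundary}, and closing with the warped-product scalar curvature formula together with the monotonicity of $\psi'+n\psi^2/2$. Your accounting of where the localized $\varepsilon'$-error comes from (the factor $\chi'(f^*r)\le 1+\varepsilon'$ on $\mathcal N_\varepsilon(I_0)$ and $=1$ elsewhere) matches the paper's derivation exactly.
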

\begin{proof}
	By applying line \eqref{eq:D^2-2}, \eqref{eq:2and3},  \eqref{eq:grad}and \eqref{eq:boundary} to line \eqref{eq:Bsquare}, we obtain 
	\begin{equation}\label{eq:estimate}
		\begin{split}
			&\|\tdirac_\Psi\sigma\|^2\geq\frac{n}{n-1}\int_{N_\lambda} |\mathcal P\sigma|^2\\
			&+\int_{N_\lambda}\Big(\frac{n}{4(n-1)}\Big[\Sc_{\overbar g}-\frac{f^*\Sc_{g_X}}{\varphi(f^*r)^2}\Big]+\frac{n}{2}\psi'(h^*r) \expand'(f^*r)+\frac{n^2}{4}\psi(h^*r)^2\Big)|\sigma|^2\\
			&+\int_{\partial N_\lambda}\big(\frac{n}{2(n-1)}H_{\overbar g}+
			\frac{n}{2}|\psi(h^*r)|\big)|\sigma|^2
 		\end{split}
	\end{equation}
	 For the warped product metric $g=dr^2+\varphi^2g_X$ on $M$, its scalar curvature is given by the following formula
	\begin{equation}\label{eq:scalar-warp}
		\frac{n}{4(n-1)}\Sc_g=\frac{n}{4(n-1)}\frac{\Sc_{g_X}}{\varphi^2}-\frac{n}{2}\psi'-\frac{n^2}{4}\psi^2.
	\end{equation}
	By our choice of the interval $I_0$ at the beginning of Subsection \ref{sec:estimates},  we have 
	\begin{equation}\label{eq:h>=f}
		\frac{n}{2}\psi'(h^*r)+\frac{n^2}{4}\psi(h^*r)^2 \geq \frac{n}{2}\psi'(f^*r)+\frac{n^2}{4}\psi(f^*r)^2.
	\end{equation}
	on $N_\lambda\backslash f^{-1}(I_0\times X)$, since $\expand(r)$ is closer to $\pm c$ than $r$. Inside $f^{-1}(I_0\times X)$, as the map $h$ is $C^\infty$-close to $f$, line \eqref{eq:h>=f} essentially becomes an equality but up to a small error, which is proportional to $\varepsilon'$. More precisely, we have 
	\begin{equation}\label{eq:h>=f-c0}
		\frac{n}{2}\psi'(h^*r)+\frac{n^2}{4}\psi(h^*r)^2 \geq \frac{n}{2}\psi'(f^*r)+\frac{n^2}{4}\psi(f^*r)^2-\frac{nc_0}{2}\varepsilon'.
	\end{equation}
	on $f^{-1}(I_0\times X)$ for all   sufficiently small $\varepsilon'$, where $c_0>0$ is a positive constant that only depends on the geometry of $I_0\times X$, the geometry of $f^{-1}(I_0\times X)$, and the restriction  of the map $f\colon N\to M$ on $f^{-1}(I_0\times X)$. In particular, $c_0$ is independent of $\varepsilon, \varepsilon'$ and $\lambda$. 
	
	Recall that $1\leq \expand'\leq 1+\varepsilon'$, and furthermore $\expand'=1$ outside $\mathcal N_\varepsilon(I_0)$. Now the proposition easily follows from the above discussion. 
\end{proof}

Now let us prove Proposition \ref{prop:special}. 
\begin{proof}[Proof of Proposition \ref{prop:special}]
Let $\varepsilon=\varepsilon_0$  given in Proposition \ref{prop:special}. Given $\varepsilon_0'$ as in Proposition \ref{prop:special}, we choose $\varepsilon'>0$ such that
\begin{equation}\label{eq:epsilon'}
\varepsilon'\cdot \max_{r\in\mathcal N_{\varepsilon}(I_0)}
\frac{n}{2}(|\psi'(r)|+c_0)\leq \frac{\varepsilon_0'}{2}\cdot \frac{n}{4(n-1)}, 
\end{equation}
where $c_0$ is the positive constant from Proposition \ref{prop:D^2}. Note that, by such a choice of $\varepsilon'$, the sum of  the second term and the third term on the right hand side of \eqref{eq:D^2all} becomes nonnegative. 

 By construction of the function $\expand$ from line \eqref{eq:expand}, we have
\begin{equation}\label{eq:mu}
	\mu=\expand(\lambda)=\lambda+\kappa(\varepsilon,\varepsilon')
\end{equation}
for all $\lambda$ sufficiently close to $\pm \gamma$.  
Note that
$$\lim_{\lambda\to c-\kappa(\varepsilon,\varepsilon')} \ \inf_{x\in\partial N_\lambda}\big(\frac{n}{2(n-1)}H_{\overbar g}(x)+
\frac{n}{2}|\psi(h^*r)|\big)=+\infty.$$
Therefore, there exists $\lambda<c-\kappa(\varepsilon,\varepsilon')$ such that
\begin{equation}\label{eq:lambda}
	\inf_{x\in\partial N_\lambda}\big(\frac{n}{2(n-1)}H_{\overbar g}(x)+
	\frac{n}{2}|\psi(h^*r)|\big)\geq 1>0
\end{equation}
To summarize, for the $\varepsilon,\varepsilon',\lambda,\mu$  chosen above,  the right-hand side of the inequality  \eqref{eq:D^2all} becomes  nonnegative.

On the other hand, since $N_\lambda$ is compact manifold with boundary and $\widehat D_\Psi$ only differs from the Dirac operator associated to $E=S(TN_\lambda\oplus h^*TM_\mu)$ over $N_\lambda$ by a bounded endomorphism, we have  that $\tdirac_\Psi$ subject to the local boundary condition $B$ is a Fredholm operator, and its Fredholm index equals 
$$\ind(\widehat D_\Psi)=\deg(h)\cdot\chi(M_\mu)$$
where $\chi(M_\mu)$ is the Euler characteristic of $M_\mu = [-\mu, \mu]\times X$. Note that $\chi(M_\mu) = \chi(X)$ and we have   $\deg(h) = \deg(f)$ by construction. Since both $\deg(f)\neq 0$ and $\chi(X)\neq 0$ by assumption, we have 
$$\ind(\widehat D_\Psi)=\deg(h)\cdot\chi(M_\mu) =\deg(f)\cdot \chi(X)\ne 0.$$
It follows that   there is a non-zero section $\sigma$ of $E$ over $N_\lambda$ satisfying the boundary condition $B$ such that $\tdirac_\Psi\sigma=0$. Consequently,  Proposition \ref{prop:D^2},  together with line \eqref{eq:mu} and \eqref{eq:lambda} above, implies  that
	\begin{equation}
	\begin{split}
		0 = \|\tdirac_\Psi\sigma\|^2\geq\frac{n}{n-1}\int_{N_\lambda} |\mathcal P\sigma|^2+\frac{\varepsilon_0'}{2}\frac{n}{4(n-1)}\int_{f^{-1}(\mathcal N_\varepsilon(I_0)\times X)}|\sigma|^2+\int_{\partial N_\lambda}|\sigma|^2
	\end{split}.
\end{equation}
It follows that  $\sigma=0 $ on  $f^{-1}(\mathcal N_\varepsilon(I_0)\times X)$ and $\mathcal P\sigma=0$ on $N_\lambda$. Now $\mathcal P\sigma=0$ and $\tdirac_\Psi\sigma = 0$ imply that   
\begin{equation}
	\widehat\nabla_\xi\sigma-\frac 1 n \overbar c(\xi)\Psi\sigma = \widehat\nabla_\xi\sigma+\frac 1 n \overbar c(\xi)\widehat D\sigma= \mathcal P\sigma =0
\end{equation}
for all $\xi \in TN_\lambda$. 
In particular, along any smooth curve $\Gamma$ in $N_\lambda$, $\sigma$ satisfies the following homogeneous ordinary differential equation
\begin{equation}\label{eq:ODE}
	\widehat\nabla_{\dot\Gamma}\sigma-\frac 1 n \overbar c(\dot\Gamma)\Psi\sigma=0,
\end{equation}
where $\dot\Gamma$ is the tangent vector field of the curve $\Gamma$. 
It follows that $\sigma$ is smooth on $N_\lambda$ and non-zero everywhere. However, we have shown that $\sigma=0 $ on  $f^{-1}(\mathcal N_\varepsilon(I_0)\times X)$. We have arrived at a contradiction. This finishes the proof.
\end{proof}

\subsection{A special case of Theorem \ref{thm:extremality}}\label{sec:special}
In this subsection, we prove the special case of Theorem \ref{thm:extremality} where the leaf $X$ is assumed to have non-vanishing Euler characteristic. 
 
We start with the following Poincar\'e type inequality in Euclidean spaces.
\begin{lemma}\label{lemma:poincareEu}
	Let $I^n=[0,1]^n=I^{n-1}\times[0,1]$ be a cube in $\R^n$ and $\ell$ a positive integer. Denote $K=I^{n-1}\times[0,\ell]$. Let $A$ be a smooth matrix-valued function on $\R^n$ with $\|A\|\leq M$ on $K$. Then for any smooth vector-valued function $\alpha$ on $\R^n$, we have
	\begin{equation}\label{eq:poincareEu}
		\int_{K}|\alpha|^2\leq e^{(2M+1)\ell}\Big(\int_{I^n}|\alpha|^2+\int_K\Big|\big(\frac{d}{dx_n}+A\big)\alpha\Big|^2\Big).
	\end{equation}
\end{lemma}
\begin{proof}
	Set $\beta=\frac{d\alpha}{dx_n}+A\alpha$. We have
	$$\frac{d}{dx_n}|\alpha|^2=2\langle\alpha,\frac{d\alpha}{dx_n}\rangle=2\langle\alpha,\beta\rangle-2\langle\alpha,A\alpha\rangle.$$
	Hence
	$$\frac{d}{dx_n}|\alpha|^2\leq 2|\alpha||\beta|+2M|\alpha|^2\leq(2M+1)|\alpha|^2+|\beta|^2.$$
	It follows that for any $s\in[0,1]$ and $0\leq d\leq \ell-1$, we have
	\begin{equation}\label{eq:Rn}
		\begin{split}
		\int_{I^{n-1}\times\{s+d\}}|\alpha|^2&\leq e^{(2M+1)d}\int_{I^{n-1}\times\{s\}}|\alpha|^2+e^{(2M+1)(s+d)}\int_{I^{n-1}\times[s,s+d]}|\beta|^2\\
		&\leq e^{(2M+1)\ell}\Big(\int_{I^{n-1}\times\{s\}}|\alpha|^2+\int_K|\beta|^2\Big).
	\end{split}
	\end{equation}
Integrating $s$ on $[0,1]$, we obtain
$$\int_{I^{n-1}\times[d,1+d]}|\alpha|^2\leq e^{(2M+1)\ell}\Big(\int_{I^{n-1}\times[0,1]}|\alpha|^2+\int_K|\beta|^2\Big).$$
By a summation for $d=0,1,\ldots,\ell-1$, we obtain
$$\int_{K}|\alpha|^2\leq e^{(2M+1)\ell}\Big(\int_{I^n}|\alpha|^2+\int_K|\beta|^2\Big).$$
This finishes the proof.
\end{proof}

From the Euclidean case, we easily generalize Lemma \ref{lemma:poincareEu} on manifolds.

\begin{lemma}\label{lemma:poincare}
	Let $N$ be an $n$-dimensional Riemannian manifold and $K$ a compact domain in $N$. Let $E$ be a Hermitian vector bundle over $N$ equipped with a connection $\nabla$, which may not preserve the metric. Let $x_0$ be a point in $N$ and $\mathcal N_\delta(x_0)$ the $\delta$-neighborhood of $x_0$. Assume $\mathcal N_\delta(x_0)$ is contained in $K$.  Then there exist $C>0$ such that
	\begin{equation}\label{eq:poincare1}
		\int_K|\sigma|^2\leq C\int_{\mathcal N_\delta(x_0)}|\sigma|^2+C\int_K|\nabla\sigma|^2
	\end{equation}
	for any smooth section $\sigma$ of $E$ over $N$. Here the constants $C$ only depend on $x_0,\delta$ and $K$. 
\end{lemma}
\begin{proof}
	Since $K$ is a compact domain in $N$, there exists a finite cover of $K$ such that each member of the cover is connected to $\mathcal N_\delta(x_0)$ via a compact subspace of $N$ that is diffeomorphic to a Euclidean tube as above. Denote these compact subspaces by $\{\mathcal  T_i\}$. Although the metric of $\mathcal T_i$ is not the Euclidean metric, it is easy to see that the same inequality in Lemma \ref{lemma:poincareEu} applies to sections of $E$ over $\mathcal T_i$, except the constants appearing in line \eqref{eq:poincareEu} need to be replaced by some constants that depend on the geometry of $\mathcal T_i$ and the geometry of $E$ over $\mathcal T_i$.  By summing up the corresponding inequalities over $\mathcal T_i$,  we have 
	$$	\int_K|\sigma|^2\leq C\int_{\mathcal N_\delta(x_0)}|\sigma|^2+C\int_K|\nabla\sigma|^2$$
	for some $C>0$. Here $C$ is a positive constant that only depends on the geometry of $K$, the geometry of $E$ over $K$, and the number of the $\mathcal T_i$'s. This finishes the proof. 

\end{proof}

Now let us prove the following special case of Theorem \ref{thm:extremality}.

\begin{theorem}\label{thm:nonzeroEuler}
	Let $M=(-c,c)\times X$ be an $n$-dimensional manifold equipped with the warped product metric 
	$$g=dr^2+\varphi(r)^2g_X$$
	such that 
	\begin{enumerate}[label=$(\arabic*)$]
		\item $\varphi$ is admissible in the sense of Definition $\ref{def:admissible}$,
		\item the curvature operator of $(X, g_X)$ is  nonnegative, and 
		\item $X$ has nonzero Euler characteristic. 
	\end{enumerate}
	Let $(N,\overbar g)$ be a Riemannian manifold and $f\colon N\to M$  a smooth spin proper map with non-zero degree. If $f$ is distance non-increasing and $\Sc_{\overbar g}\geq f^*\Sc_g$, then $\Sc_{\overbar g}=f^*\Sc_g$.  Furthermore, the following hold. 
	\begin{enumerate}[label=$(\mathrm{\Roman*})$]
		\item If $\varphi$ is strictly log-concave, that is, $(\log\varphi)''<0$, then $N=(-c,c)\times Y$ for some Riemannian manifold $(Y,g_Y)$ and the metric  $\overbar g=dr^2+\varphi(r)^2g_Y$, and the map $f$ respects  the product structures. 
		\item If $\varphi$ is strictly log-concave and the metric $g_X$ on the leaf $X$ has positive Ricci curvature, then $f$ is a local isometry.
	\end{enumerate}
\end{theorem}
\begin{proof}\ 
	
	\textbf{Scalar extremality.} First let us prove the scalar-extremality part of the theorem, that is, we first prove that $\Sc_{\overbar g}=f^*\Sc_g$.
	
	Assume on the contrary that $\Sc_{\overbar g}\ne f^*\Sc_{g}$ somewhere. Then there exist $x_0\in N$ and $\delta>0$ such that 
	\begin{equation}
	\Sc_{\overbar g}(x)\geq f^*\Sc_{g}(x)+\delta,~\forall x\in\mathcal N_{\delta}(x_0).
\end{equation}

	Let $\mathcal P$ be the Penrose operator defined in line \eqref{eq:penrosedef}.
	The exact same proof of Proposition \ref{prop:D^2} shows that for any sufficiently small  $\varepsilon, \varepsilon'>0$, we have 
	\begin{equation}\label{eq:D^2all-2}
		\begin{split}
		\|\tdirac_\Psi\sigma\|^2\geq&\frac{n}{n-1}\int_{N_\lambda} |\mathcal P\sigma|^2+\frac{\delta n}{4(n-1)}\int_{\mathcal N_\delta(x_0)}|\sigma|^2\\
			&-\frac{\varepsilon' n}{2}\int_{f^{-1}(\mathcal N_\varepsilon(I_0)\times X)}(|\psi'(h^*r)|+c_0)|\sigma|^2\\
			&+\int_{\partial N_\lambda}\big(\frac{n}{2(n-1)}H_{\overbar g}+
			\frac{n}{2}|\psi(h^*r)|\big)|\sigma|^2, 
		\end{split}
	\end{equation}
where $c_0$ is the same constant from Proposition \ref{prop:D^2}. 
Since both $\deg(f)\neq 0$ and $\chi(X)\neq 0$ by assumption, we have 
$$\ind(\widehat D_\Psi)=\deg(h)\cdot\chi(M_\mu) =\deg(f)\cdot \chi(X)\ne 0.$$
There exists a non-zero section $\sigma$ of $E= S(TN_\lambda\oplus h^*TM_\mu)$ over $N_\lambda$ satisfying the boundary condition $B$ such that  $\tdirac_\Psi\sigma=0$.  

We define an operator $\mathcal Q\colon C^\infty(N_\lambda,E)\to C^\infty(N_\lambda,T^*N_\lambda\otimes E)$ by
\begin{equation}\label{eq:Q}
	\mathcal Q_\xi\sigma=\tcon_\xi\sigma-\frac{1}{n}\overbar c(\xi)\Psi\sigma,
\end{equation}
which is a connection on $E$ that may not preserve the metric.
Note that $\tdirac_\Psi\sigma=0$ implies  
	\[ \mathcal P\sigma=\mathcal Q\sigma. \] 
	
	If we choose   $\varepsilon, \varepsilon'>0$ to be sufficiently small and $\lambda, \mu$ to satisfy line \eqref{eq:lambda} and \eqref{eq:mu}, then  every term, except the third term,  on the right-hand side of line \eqref{eq:D^2all-2} is nonnegative.  Let $K$ be a fixed compact domain of $N$ such that $K$ contains both  $f^{-1}(\mathcal N_\varepsilon(I_0)\times X)$ and $\mathcal N_\delta(x_0)$. In particular, $K$ does not depend on the choice of $\varepsilon$,  as along as $\varepsilon$ is sufficiently small.  Note that $|\psi'|$ is uniformly bounded on the pre-compact set $\mathcal N_\varepsilon(I_0)$. Without loss of generality, let us say $|\psi'|\leq 1$ on  $\mathcal N_\varepsilon(I_0)$.  
	Now  line \eqref{eq:D^2all-2}, together with the fact that $\tdirac_\Psi\sigma=0$ and  $\mathcal P\sigma=\mathcal Q\sigma$,   implies that  
	\begin{equation}\label{eq:comparison}
		\begin{cases}
			\displaystyle \int_{N_\lambda}|\mathcal Q\sigma|^2\leq \frac{\varepsilon'(n-1)}{2}(1+c_0) \int_K|\sigma|^2,	\vspace{0.2cm}\\
			\displaystyle \delta \int_{\mathcal N_\delta(x_0)}|\sigma|^2\leq 2\varepsilon'(n-1)(1+c_0) \int_K|\sigma|^2.
		\end{cases}
	\end{equation}
Note that $\mathcal Q$ only differs from the spinorial connection $\nabla$ on $E$ by an endomorphism, which is uniformly bounded on $K$. By Lemma \ref{lemma:poincare}, there exists a constant $C>0$ such that
	\begin{equation}\label{eq:poincare}
	\int_K|\sigma|^2\leq C\int_{\mathcal N_\delta(x_0)}|\sigma|^2+C\int_K|\mathcal Q\sigma|^2.
\end{equation}
As we have seen in the proof of Lemma \ref{lemma:poincare}, the constant $C$ only depends on $K$ and the geometry of $E$ over $K$, which is covered by some compact tubes $\mathcal N_{\delta}(x_0)$. Strictly speaking, the metric of $E = S(TN_\lambda\oplus h^*TM_\mu)$, hence its associated Levi-Civita connection, depends on the map $h$. But by construction the map $h$ is $C^\infty$-close to the map $f\colon N\to M$. In particular, the metric and its associated connection of $E$ over the set $K$ are uniformly bounded by some positive constant that is independent of $h$.  Therefore we may choose $C>0$ independent of $\varepsilon,\varepsilon',\lambda$ and $\mu$.

It follows that  we have
$$\int_K|\sigma|^2\leq \varepsilon'(1+c_0)\Big(\frac{2C(n-1)}{\delta}+\frac{C(n-1)}{2}\Big)\cdot \int_K|\sigma|^2.$$
Now we choose $\varepsilon'$ to be sufficiently small so that
$$\varepsilon'(1+c_0)\Big(\frac{2C(n-1)}{\delta}+\frac{C(n-1)}{2}\Big) \leq \frac 1 2<1.$$
It follows that $\sigma\equiv 0$ on $K$. By the first line of \eqref{eq:comparison}, we have $\mathcal Q\sigma=0$ everywhere on $N_\lambda$. Hence $\sigma$ satisfies the homogeneous ordinary differential equation \eqref{eq:ODE} along every curve in $N_\lambda$. In particular, $\sigma$ vanishes everywhere on $N_\lambda$, which contradicts the fact that $\sigma$ is a nonzero section. This proves the scalar extremality part of the theorem. 

\textbf{Scalar rigidity}. Now let us prove the scalar rigidity part of the theorem. 
First, let us prove part (I), that is, we prove that  if $\varphi$ is strictly log-concave, then $(N,\overbar g)$ is also a warped product metric with the same warping function $\varphi$. 

Let us start with the following claim. 
\begin{claim}\label{claim:dr=1} Under the given assumption, we have  
	$|\grad_{\overbar g}(f^*r)|=1.$
\end{claim}
\begin{proof}[Proof of Claim \ref{claim:dr=1}]
	By the assumption that $f$ is distance-non-increasing, we see that 
	\[ |\grad_{\overbar g}(f^*r)|\leq 1. \] Assume on the contrary that 
	\[ |\grad_{\overbar g}(f^*r)|<1 \] somewhere. More precisely, we assume that there exists $x_0\in N$ and $\delta>0$ such that
	\begin{equation}\label{eq:grad>delta}
		|\grad_{\overbar g}(f^*r)(x)|< 1-\delta,~\forall x\in \mathcal N_\delta(x_0).
	\end{equation}
	
	Recall that we have used the inequality \eqref{eq:grad} in the proof of Proposition \ref{prop:D^2}. If we do not apply the inequality \eqref{eq:grad}, then  the inequality \eqref{eq:D^2all} may be written as follows
	\begin{equation}\label{eq:D^2all-3}
		\begin{split}
			\|\tdirac_\Psi\sigma\|^2\geq&\frac{n}{n-1}\int_{N_\lambda} |\mathcal P\sigma|^2+\frac{n}{4(n-1)}\int_{N_\lambda}(\Sc_{\overbar g}-f^*\Sc_{g})|\sigma|^2\\
			&-\frac{\varepsilon' n}{2}\int_{f^{-1}(\mathcal N_\varepsilon(I_0)\times X)}(|\psi'(h^*r)|+c_0)|\sigma|^2\\
			&+\frac{n}{2}\int_{N_\lambda}|\psi'(h^*r)|\cdot\big\langle\big(\expand'(f^*r)-\overbar c(\grad_{\overbar g}(h^*r))\mathscr E c(\partial_r)\big)\sigma,\, \sigma\big\rangle\\
			&+\int_{\partial N_\lambda}\big(\frac{n}{2(n-1)}H_{\overbar g}+
			\frac{n}{2}|\psi(h^*r)|\big)|\sigma|^2.
		\end{split}
	\end{equation}
In other words,  the effect of applying the inequality \eqref{eq:grad} is to eliminate the third line from the above inequality \eqref{eq:D^2all-3}. In particular, the above inequality \eqref{eq:D^2all-3} becomes  the inequality  \eqref{eq:D^2all} if we apply  the inequality \eqref{eq:grad}. 
	
	Note that 
	\[ |c(\grad_{\overbar g}(h^*r))\mathscr E c(\partial_r)| = |c(\grad_{\overbar g}(h^*r))| =  \rho'(f^\ast r) |c(\grad_{\overbar g}(f^*r))| \leq \rho'(f^\ast r) \]
	where Here we have used the fact that  $f$ is distance-non-increasing. Therefore, if we replace the domain of integral $N_\lambda$ in the third line of the inequality \eqref{eq:D^2all-3} with $\mathcal N_\delta(x_0)$, the  inequality \eqref{eq:D^2all-3} still holds. 
	Since $\varphi$ is strictly log-concave, there exists $\delta'>0$ that
	$$|\psi'(f^*r)|>\delta',~\forall x\in \mathcal N_{2\delta}(x_0).$$
It follows from line \eqref{eq:grad>delta} that for $\varepsilon$ and $\varepsilon'$  sufficiently small, we have 
	\begin{equation}\label{eq:D^2all-4}
		\begin{split}
			\|\tdirac_\Psi\sigma\|^2\geq&\frac{n}{n-1}\int_{N_\lambda} |\mathcal P\sigma|^2+\frac{n}{4(n-1)}\int_{N_\lambda}(\Sc_{\overbar g}-h^*\Sc_{g})|\sigma|^2\\
			&-\frac{\varepsilon' n}{2}\int_{f^{-1}(\mathcal N_\varepsilon(I_0)\times X)}(|\psi'(h^*r)|+c_0)|\sigma|^2
			+\frac{n\delta\delta'}{2}\int_{\mathcal N_\delta(x_0)}|\sigma|^2\\
			&+\int_{\partial N_\lambda}\big(\frac{n}{2(n-1)}H_{\overbar g}+
			\frac{n}{2}|\psi(h^*r)|\big)|\sigma|^2.
		\end{split}
	\end{equation}
By using Lemma \ref{lemma:poincare},  the same argument as in the proof for  the \textbf{Scalar extremality} part above shows that the inequality \eqref{eq:D^2all-4} leads to a contradiction. This finishes the proof of the claim. 
\end{proof}

By Claim \ref{claim:dr=1}, we see that $f^\ast r$ is a smooth function on $N$ with no critical points, and moreover $f_*(\grad_{\overbar g}(f^*r))=\partial_r$. Therefore, $N$ is diffeomorphic to $(-c,c)\times Y$, where $Y$ is the preimage of the leaf $X_{r_0}$ for some (hence any) $r_0$. Furthermore, it follows that  $f\colon N \to M=(-c,c)\times X$ preserves the product structures. The metric $\overbar g$ is now given by
$$\overbar g=dr^2+\overbar g_r.$$
We shall show that $\overbar g$ is indeed a warped product metric.

For notational simplicity, we also denote  $\grad_{\overbar g}(f^*r)$ by $\partial_{\overbar r}$. We define a tensor field $V\in C^\infty(N,TN\otimes TN)$ by
\begin{equation}\label{eq:V}
	V_\xi\coloneqq \nabla^N_\xi \partial_{\overbar r}-\psi(f^*r)\big(\xi-\langle\xi,\partial_{\overbar r}\rangle_N\cdot \partial_{\overbar r}\big)
\end{equation}
for any tangent vector field $\xi$ of $N$. 
\begin{claim}\label{claim:vanish} 
	We have $V\equiv 0.$
\end{claim}
\begin{proof}[Proof of Claim \ref{claim:vanish}]
	Assume on the contrary that $V\ne 0$. Then there exists $x_0\in N$, $\delta>0$, and a unit tangent vector field $\xi$ over $\mathcal N_{\delta}(x_0)$ such that 
	\begin{equation}\label{eq:V>delta}
		|(V_\xi)_x|>\delta,~\forall x\in \mathcal N_{\delta}(x_0).
	\end{equation}
	 By compactness, we assume that
	$$|V|\leq C_1,\text{ and }|\nabla V|\leq C_1$$
	for some $C_1>0$ on $\mathcal N_\delta(x_0)$. 
	
Let us first prove a technical estimate (Inequality \eqref{eq:Ndelta} below), which will then be combined with Lemma \ref{lemma:poincare} to get a contradiction. Suppose $\sigma$ is a smooth section of $E$ over $N_\lambda$. We set
	\begin{equation}
		T=1-\overbar c(\partial_{\overbar r})\mathscr E c(\partial_r).
	\end{equation}
	A direct computation shows that
	\begin{equation}
		[\mathcal Q_\xi,T]=\overbar c(V_\xi)\mathscr E c(\partial_r),
	\end{equation}
	where the operator $\mathcal Q$ is defined in line \eqref{eq:Q}.
	We may assume without loss of generality that $\delta$ is sufficiently small so that the boundary $\partial\mathcal N_s(x_0)$ of $\mathcal N_s(x_0)$ is a smooth hypersurface in $\mathcal N_\delta(x_0)$ for each $0<s\leq \delta$. By the Stokes formula, we have
	\begin{equation*}
		\begin{split}
			&\int_{\mathcal N_s(x_0)}|\overbar c(V_\xi)\sigma|^2=\int_{\mathcal N_s(x_0)}\langle (\mathcal Q_\xi T-T\mathcal Q_\xi)\sigma,[\mathcal Q_\xi,T]\sigma\rangle\\
			=&\int_{\mathcal N_s(x_0)}\big(\langle\mathcal Q_\xi T\sigma,[\mathcal Q_\xi,T]\sigma\rangle-\langle\mathcal Q_\xi\sigma,T[\mathcal Q_\xi,T]\sigma\rangle \big)\\
			=&\int_{\mathcal N_s(x_0)}\Big(\langle T\sigma,[\mathcal Q_\xi,[\mathcal Q_\xi,T]]\sigma+[\mathcal Q_\xi,T]\mathcal Q_\xi\sigma\rangle-\langle\mathcal Q_\xi\sigma,T[\mathcal Q_\xi,T]\sigma\rangle \Big)\\
			&+\int_{\partial \mathcal N_s(x_0)}\langle \nu_s,\xi\rangle
			\langle T\sigma,[\mathcal Q_\xi,T]\sigma\rangle,
		\end{split}
	\end{equation*}
where $\nu_s$ is the unit inner normal vector of $\partial \mathcal N_s(x_0)$.
Note that
\begin{equation}
	\begin{split}
	[\mathcal Q_\xi,[\mathcal Q_\xi,T]]=[\mathcal Q_\xi,\overbar c(V_\xi)\mathscr E c(\partial_r)]
	=\overbar c(\nabla^N_{\xi}V_\xi)\mathscr E c(\partial_r)-\psi(h^*r)\overbar c(\xi\wedge V_\xi).
	\end{split}
\end{equation}
	Therefore, there is $C_2>0$ such that
	\begin{equation}\label{eq:Ns}
		\begin{split}
			\int_{\mathcal N_s(x_0)}|\overbar c(V_\xi)\sigma|^2
			\leq& C_2\int_{\mathcal N_s(x_0)}\big(|T\sigma||\sigma|+|\mathcal Q_\xi\sigma||\sigma|+|\mathcal Q_\xi\sigma||T\sigma|\big)\\
			&+C_2\int_{\partial\mathcal N_s(x_0)}|T\sigma||\sigma|
		\end{split}
	\end{equation}
	For $x\in\mathcal N_\delta(x_0)$, we define $F(x)=\delta-\dist(x,x_0)$,
	which gives  a continuous positive function on $\mathcal N_\delta(x_0)$. 
	We integrate both sides of inequality \eqref{eq:Ns} for $s\in [0,\delta]$. By changing the order of integration, we obtain
	\begin{equation*}
		\begin{split}
			\int_{\mathcal N_\delta(x_0)}F\cdot|\overbar c(V_\xi)\sigma|^2
			\leq& C_2\int_{\mathcal N_\delta(x_0)}F\cdot\big(|T\sigma||\sigma|+|\mathcal Q_\xi\sigma||\sigma|+|\mathcal Q_\xi\sigma||T\sigma|\big)\\
			&+C_2\int_{\mathcal N_\delta(x_0)}|T\sigma||\sigma|.
		\end{split}
	\end{equation*}
	Therefore, there exists $C_3>0$ such that
	\begin{equation}\label{eq:Ndelta}
		\begin{split}
			\int_{\mathcal N_{\frac{\delta}{2}}(x_0)}|\overbar c(V_\xi)\sigma|^2
			\leq C_3\int_{\mathcal N_\delta(x_0)}\big(|T\sigma||\sigma|+|\mathcal Q_\xi\sigma||\sigma|+|\mathcal Q_\xi\sigma||T\sigma|\big)\\
		\end{split}
	\end{equation}
	
	Now suppose that $\sigma$ is a non-zero section of $E$ over $N_\lambda$ satisfying the boundary condition such that $\tdirac_\Psi\sigma=0$. Let $K$ be a fixed compact domain in $N_\lambda$ that contains $\mathcal N_\delta(x_0)$ and $f^{-1}(\mathcal N_\varepsilon(I_0)\times X)$. By Claim \ref{claim:dr=1}, we see the integrand in the third line of the inequality \eqref{eq:D^2all-3} becomes 
\begin{align*}
	& |\psi'(h^*r)|\cdot\big\langle\big(\expand'(f^*r)-\overbar c(\grad_{\overbar g}(h^*r))\mathscr E c(\partial_r)\big)\sigma,\, \sigma\big\rangle \\
	= & |\psi'(h^*r)| \cdot\big\langle\big( \expand'(f^*r) - \expand'(f^*r)   \overbar c(\grad_{\overbar g}(f^*r))\mathscr E c(\partial_r)\big)\sigma,\, \sigma\big\rangle \\
	 = & |\psi'(h^*r)| \expand'(f^*r)\cdot\big\langle\big( 1  - \overbar c(\partial_{\overbar r})\mathscr E c(\partial_r)\big)\sigma,\, \sigma\big\rangle \\
	 = & |\psi'(h^*r)| \expand'(f^*r)\cdot\langle T\sigma,\, \sigma\rangle
\end{align*}
It follows from the inequality \eqref{eq:D^2all-3} and $\mathcal P\sigma  = \mathcal Q\sigma $ that there is $C_4>0$ such that
	\begin{equation}\label{eq:comparison2}
		\begin{cases}
			\displaystyle \int_{N_\lambda}|\mathcal Q\sigma|^2\leq C_4\varepsilon'\int_K|\sigma|^2,	\vspace{0.2cm}\\
			\displaystyle  \int_{N_\lambda}\langle T\sigma,\sigma\rangle\leq C_4\varepsilon'\int_K|\sigma|^2.
		\end{cases}
	\end{equation}
	Note that $T$ is a self-adjoint endormorphism and $(1-T)^2=1$. It follows that,  at each point of $N$, the operator $T$ is a self-adjoint matrix  with eigenvalues $0$ and $2$. In particular, we have
	\begin{equation}\label{eq:T=T^2}
		\langle T\sigma,\sigma\rangle=\frac 1 2 |T\sigma|^2.
	\end{equation}
	From line \eqref{eq:Ndelta}, \eqref{eq:comparison2}, \eqref{eq:T=T^2} and the Cauchy--Schwarz inequality, we have
	\begin{equation}
		\int_{\mathcal N_{\frac{\delta}{2}}(x_0)}|\overbar c(V_\xi)\sigma|^2
		\leq C_5\sqrt{\varepsilon'}\int_K|\sigma|^2
	\end{equation}
	for some $C_5>0$. Combined with  line \eqref{eq:V>delta}, we obtain 
	\begin{equation}\label{eq:contra-inequal}
		\int_{\mathcal N_{\frac{\delta}{2}}(x_0)}|\sigma|^2
		\leq \frac{C_5\sqrt{\varepsilon'}}{\delta^2}\int_K|\sigma|^2.
	\end{equation}
	We emphasize that the constants $\{C_i\}_{1\leq i \leq 5}$ above  are independent of the choice of the parameters $\varepsilon,\varepsilon',\lambda,\mu$ that appear in the construction of the map $h$. By using Lemma \ref{lemma:poincare},   we see that  the inequality \eqref{eq:contra-inequal},  together with line \eqref{eq:comparison2},  leads to a contradiction (cf. the corresponding argument in the proof for  the \textbf{Scalar extremality} part above). This prove Claim \ref{claim:vanish}.
\end{proof}

Now that we know $V=0$, that is, 
\begin{equation}
	\nabla^N_\xi \partial_{\overbar r}=\psi(f^*r)\big(\xi-\langle\xi, \partial_{\overbar r}\rangle_N\cdot  \partial_{\overbar r}\big), 
\end{equation}
it follows that  the integral curves  of $ \partial_{\overbar r}$ on $N$ are geodesic and the second fundamental form of the leaf $\{r\}\times Y$ is equal to $\psi \cdot I$, where $I$ stands for the identity matrix. In other words, all principal curvatures of $\{r\}\times Y$ are equal to $\psi$.   Therefore,  $(N, \overbar g)$ is also a warped product metric. Moreover, since $\psi = \varphi'/\varphi$,  it follows that $\overbar g$ is  of the form
\[ \overbar g = dr^2 + \varphi^2 g_Y \] 
 where $g_Y$ is some Riemannian metric on $Y$. 
This proves \textbf{Scalar rigidity} part (I).

Now we prove \textbf{Scalar rigidity} part (II). Denote $Y_r=\{r\}\times Y$. By the proof of \textbf{Scalar rigidity} part (I) above, the map $f$ maps $Y_r$ to $X_r$. By assumption, $X$ is Ricci positive and $f\colon Y_r\to X_r$ is distance-non-increasing. To prove \textbf{Scalar rigidity} part (II), it suffices to show that 
	$f_*\colon TY_r\to TX_r$ is an isometry for every $r\in (-c, c)$.

	Assume to the contrary that $f_\ast$ is not an isometry for some $r_0$ and $y_0\in Y_{r_0}$.  Consider the singular value decomposition of $f_*\colon T_{y_0}Y_{r_0}\to T_{f(y_0)}X_{r_0}$, that is, there exist orthonormal bases $\{\overbar e_i\}_{1\leq i\leq n-1}$ of $T_{y_0}Y_{r_0}$ and $\{e_i\}_{1\leq i\leq n-1}$ of $T_{f(y_0)}X_{r_0}$ such that $f_*\overbar e_i=\mu_ie_i$ for some $\mu_i\in[0,1]$. By our assumption,  there exists some $i_0$ such that $$\mu_{i_0}\leq \sqrt{1-\delta'}$$
	for some $\delta'>0$. Then by definition of the map $h$, we have $h_*\overbar e_i=\alpha\mu_iv_i$, where $\{v_i\}_{1\leq i\leq n-1}$ is an orthonormal basis of $T_{h(y_0)}X_{\expand(r_0)}$ and 
	$\alpha = \varphi(\expand(r_0))/\varphi(r_0) $. Compare with the proof of Lemma \ref{lemma:curvature>=} to see how the constant $\alpha$ enters into the estimates. It follows from the above discussion that the inequality \eqref{eq:Rijij} becomes a strict inequality. More precisely, at the point $(r_0, y_0)\in N$, we have 
	\begin{equation}\label{eq:Rijij-strict}
		\alpha^{-2}\sum_k \overbar c(\overbar Lw_k)^2=- \alpha^{2}\sum_{i<j}\mu_i^2\mu_j^2(h^\ast  \widehat R_{ijji})\geq-\frac{f^*\Sc_{g_X}}{2\varphi(f^*r)^2}+\delta'\cdot \frac{f^*\Ric_{X}(e_{i_0})}{\varphi(f^*r)^2}
	\end{equation}
where $\Ric_X(e_{i_0})$ is the Ricci curvature of $(X, g_X)$ at $f(y_0)$ in  the direction of $e_{i_0}$. By continuity, the inequality \eqref{eq:Rijij-strict} above (but with possibly a smaller $\delta'$) also holds on 
a small neighborhood of $(r_0,y_0)$ in $N = (-c, c) \times Y$. 
To summarize, we see that  there exist $\delta>0$ and $x_0 = (r_0, y_0)\in N$ such that
	\begin{equation}\label{eq:curvature>}
		\mathcal R  \geq \frac{\Sc_{\overbar g}}{4}-\frac{f^*\Sc_{g_X}}{4\varphi(f^*r)^2}+\delta
	\end{equation}
	on $\mathcal N_\delta(x_0)$,  where $\mathcal R$ is the curvature term from line \eqref{eq:D^2} (cf. the proof of Lemma \ref{lemma:curvature>=}). Together with the estimates from the proof of Proposition \ref{prop:D^2}, this  implies that there exists $x_0\in N$ and $\delta>0$ such that 
\begin{align*}
& \frac{n}{n-1}	\mathcal R +  \frac{n}{2}\psi'(h^*r) \expand'(f^*r)+\frac{n^2}{4}\psi(h^*r)^2 \\
& \geq 
\frac{n}{4(n-1)} \left(\Sc_{\overbar g}(x) -  f^*\Sc_{g}(x)\right)+\delta \geq \delta,\quad \forall x\in\mathcal N_{\delta}(x_0). 
\end{align*} Now we proceed exactly the same way as the proof of \textbf{Scalar extremality} part and arrive at a contradiction. This proves \textbf{Scalar rigidity} part (II), hence completes the proof of the theorem. 
\end{proof}

\section{Scalar curvature rigidity of degenerate spherical bands}\label{sec:sphere}
In the previous section, we prove a special case of Theorem \ref{thm:extremality} where the leaf $X$ of $M = (-c, c)\times X$ has non-zero Euler characteristic. In this section, we shall  prove a special case of  Theorem \ref{thm:extremality} where the leaf $X$ is a standard round sphere. 

\begin{theorem}\label{thm:extremalitySphere}
	Let $M=(-c,c)\times X$ be an $n$-dimensional manifold equipped with the warped product metric 
	$$g=dr^2+\varphi(r)^2g_X$$
	such that 
	\begin{enumerate}[label=$(\arabic*)$]
		\item $\varphi$ is admissible in the sense of Definition $\ref{def:admissible}$ and 
		\item $(X,g_X)$ is the $(n-1)$-dimensional standard round sphere $(\sph^{n-1},g^{\sph^{n-1}}_{st})$. 
	\end{enumerate}
	Let $(N,\overbar g)$ be a spin Riemannian manifold and $f\colon N\to M$ be a smooth proper map with non-zero degree. If $f$ is distance non-increasing and $\Sc_{\overbar g}\geq f^*\Sc_g$, then $\Sc_{\overbar g}=f^*\Sc_g$. Furthermore, if in addition $n\geq 3$ and $\varphi$ is strictly log-concave, then $f$ is a  isometry.
\end{theorem}

Of course, the case where $X$ is an even dimensional sphere has already been covered by Theorem \ref{thm:nonzeroEuler}. So it remains to consider the case where $X$ is a standard odd dimensional sphere.  In order to overcome the difficulty caused by the fact an odd dimensional sphere has vanishing Euler characteristic,  we follow Llarull's idea and take the  direct product with a large circle. But this introduces an extra small error term in the relevant curvature estimates.  A key step of our proof is to dominate this extra error term by  the Poincar\'e type inequality from Lemma \ref{lemma:poincare}.  As mentioned in the introduction, due to the extra error term caused by introducing an auxiliary circle, there is a minor gap in Llarull's  proof  for the scalar rigidity of a \emph{closed} standard \emph{odd} dimensional sphere \cite[Section 4]{Llarull}. In order to make more clear how the Poincar\'e type inequality (Lemma \ref{lemma:poincare}) enters into our proof of Theorem \ref{thm:extremalitySphere}, let us first demonstrate how it can applied to fix this minor gap in Llarull's proof  for the scalar rigidity of a closed standard odd dimensional sphere. 

\begin{theorem}[{Llarull \cite{Llarull}}]\label{thm:llarull-odd}
	Let $(\mathbb S^{2k+1}, g_{st})$ be the standard round unit sphere of dimension $(2k+1)\geq 3$. Let $(N,\overbar g)$ be a closed spin Riemannian manifold and  $f\colon N\to M$  a smooth map with non-zero degree. If  $\Sc_{\overbar g}\geq 2k(2k+1)$ and $f$ is area-non-increasing,  then $f$ is an isometry.
\end{theorem}
\begin{proof}
	We first follow closely Llarull's original proof. Let $N\times \mathbb S^1_R$ be the Riemannian product of $N$ with the circle  $\mathbb S^1_R$ of radius $R$. Consider the following map
	\begin{equation}
			(N\times \sph^1_R,\overbar g+R^2d\theta^2)\xlongrightarrow{f\times\frac{\id}{R}}(\sph^{2k+1}\times \sph^1, g_{st}+d\theta^2)\xlongrightarrow{\alpha} 
			\sph^{2k+1}\wedge \sph^1 = \sph^{2k+2}
	\end{equation} 
	where  $\mathbb S^{2k+2}$ is the standard unit round sphere, $f\times\frac{\id}{R}$ is given by $(f\times\frac{\id}{R})(x, \theta) = (x, \frac{\theta}{R})$ for all $(x, \theta) \in N\times \sph^1_{R}$, and $\alpha$ is a distance-non-increasing map of nonzero degree. 
	
	Let us write $\widetilde f = \alpha\circ (f\times \frac{\id}{R})$.  Let $E$ be the following spinor bundle over $N\times \sph^1_R$: 
	\[ E = S\left(T(N\times \sph^1_R) \oplus \widetilde f^\ast T\sph^{2k+2}\right). \]  
	Let $D$ be the corresponding Dirac operator for $E$. By the Bochner--Lichnerowicz--Weitzenbock formula, we have 
	\begin{equation}\label{eq:D^2-sphere}
		D^2=\nabla^*\nabla+\frac{\Sc_{\overbar g}}{4} +\frac{1}{8}\sum_{i,j}\sum_{k,l}\langle \widetilde f^\ast R_{\overbar e_i,\overbar e_j}e_k,e_l\rangle \, \overbar c(\overbar e_i)\overbar c(\overbar e_j)\otimes c(e_k)c(e_l),
	\end{equation}
	where $\{e_i\}$ is a local orthonormal basis of $\widetilde f^\ast T\sph^{2k+2}$, and  $\widetilde f^\ast R$ is the curvature form of $\widetilde f^\ast T\sph^{2k+2}$. Here we have used the obvious fact the scalar curvature of $N\times \sph^1_R$ coincides with the scalar curvature $\Sc_{\overbar g}$ of $(N, \overbar g)$. If  $\{\overbar w_j\}$ and $\{w_i\}$ are local orthonormal bases of $\Bigwedge^2T(N\times \sph^1_R)$ and $\widetilde f^\ast \Bigwedge^2T\sph^{2k+2}$ respectively, 
	then we can rewrite \eqref{eq:D^2-sphere} as
	\begin{equation}\label{eq:D^2-sphere2}
		D^2=\nabla^*\nabla+\frac{\Sc_{\overbar g}}{4}-\frac{1}{2}\sum_{k,l}\langle f_*\overbar w_k,w_l\rangle \, \overbar c(\overbar w_k)\otimes c(w_l).
	\end{equation}
  We choose a local $\overbar g$-orthonormal frame $\overbar e_1,\ldots,\overbar e_{2k+2}$ of $T(N\times \sph^1_R)$, where $\overbar e_{2k+2}$ is tangential to $\sph_R^1$,   and a local $g$-orthonormal frame $e_1,\ldots,e_{2k+2}$ of $T\sph^{2k+2}$ such that $\widetilde f_*\overbar e_i=\mu_i e_i$ with $\mu_i\geq 0$.   Then we have 
  $\widetilde f_*(\overbar e_i\wedge\overbar e_j)=\mu_i\mu_j e_i\wedge e_j$. 
  As $\widetilde f$ is area-non-increasing, we have $\mu_i\mu_j\leq 1$ for all $i\neq j$. Moreover, since $\widetilde f$ is $\frac{1}{R}$-contracting along the $\sph_R^1$ direction, we have $\mu_{2k+2}\leq $$\frac{1}{R}$. It follows that      
  \[  D^2 \geq  \nabla^*\nabla + \frac{\Sc_{\overbar g}}{4} - \frac{2k(2k+1)}{4} - \frac{2k+1}{2R}. \]
In particular, we have 
  \begin{equation}\label{eq:D^2witherror}
  	\|D\varphi\|^2  \geq \|\nabla \varphi\|^2 + \int_{N\times \sph^1_R} \Big(\frac{\Sc_{\overbar g}}{4} - \frac{2k(2k+1)}{4} - \frac{2k+1}{2R}\Big) |\varphi|^2 
  \end{equation}
  for all smooth sections $\varphi$ of $E$ over $N\times \sph^1_R$. So far, we have essentially followed the same argument of Llarull \cite[Section 4]{Llarull}. Note that the Fredholm index of $D$ is nonzero, in fact, equal to $2$ times the degree of $\widetilde f$, where $2$ comes from the Euler characteristic of $\sph^{2k+2}$. Therefore there exists a nonzero section $\sigma$ of $E$ such that $D\sigma =0$. One would like to plug $\sigma$ into  the inequality \eqref{eq:D^2witherror} to conclude that  $\Sc_{\overbar g} =  2k(2k+1)$. However,  a priori,  the extra error term $-\frac{2k+1}{2R}$ prevents us from directly making such a conclusion. In the following, we shall use the Poincar\'e type inequality from Lemma \ref{lemma:poincare} to get round this issue. 
  
  Let us prove  $\Sc_{\overbar g}=2k(2k+1)$ by contradiction. Assume to the contrary that the inequality $\Sc_{\overbar g}\geq 2k(2k+1)$ is strict somewhere. Then there are $x_0\in N$ and $\delta>0$ such that
  $$\Sc_{\overbar g}(x)\geq 2k(2k+1)+\delta,~\forall x\in\mathcal N_{\delta}(x_0).$$
  It follows that 
  \[ \Sc_{N\times \sph^1_R}(x) \geq 2k(2k+1) +\delta, ~\forall x\in\mathcal N_{\delta}(x_0) \times \sph^1_R. \]
  
  We recall that the constants appearing in  the Poincar\'e type inequality from Lemma \ref{lemma:poincare} only depends on the local geometry of $N\times \sph^1_R$ and the bundle $E$,  the number of tubes (as in the proof of Lemma \ref{lemma:poincare}) that cover $N\times\sph^1_R$ and their sizes.
  Let $\widetilde f_1$ be the map $N\times\sph^1_1\to\sph^{2k+2}$ for $R=1$, and $i_R$ 
  $$i_R\colon (N\times \sph^1_R,\overbar g+R^2d\theta^2)\to (N\times \sph^1_1,\overbar g+d\theta^2)$$
  the identity map (at the level of sets). 
 Let 
 	\[ E_1 = S\left(T(N\times \sph^1_1) \oplus \widetilde f_1^\ast T\sph^{2k+2}\right). \]  
 We notice that $E=i_R^*E_1$, and the spinorial connection $\nabla$ on $E$ is the pull-back of the connection on $E_1$ by $i_R$. Clearly $E_1$ is independent of $R$, and as $R\to\infty$, the connection on $i_R^*E_1$ becomes  flatter in the $\sph^1$-direction. Thus, the local geometric data of $N\times \sph^1_R$ and the bundle $E$ over $N\times \sph^1_R$ are uniformly bounded for any $R\geq 1$.
  
  Given $\mathcal N_\delta(x_0)\subset N$, there exists finitely many tubes $\mathcal T_i$ as in the proof of Lemma \ref{lemma:poincare} that contain $\mathcal N_\delta(x_0)$ and cover $N$. Then each tube $\mathcal T_i\times \sph^1_R$  contains $\mathcal N_\delta(x_0)\times\sph^1_R$ and together they cover $N\times\sph^1_R$. The cardinality of the set $\{\mathcal T_i\times \sph^1_R \}$ is clearly independent of $R$. Moreover, the constant  appearing in the corresponding  the Poincar\'e type inequality (as in Lemma \ref{lemma:poincare}) may be chosen so that it only depends on  the size of  $\mathcal T_i$, in particular, is independent of $R$, as long as $R\geq 1$.   Therefore, it follows from Lemma \ref{lemma:poincare} that there exists $C>0$ (independent of $R\geq 1$) such that 
 \begin{equation}\label{eq:poincare3}
 	\int_{N\times \sph^1_R} |\varphi|^2\leq C\int_{\mathcal N_\delta(x_0)\times \sph^1_R} |\varphi|^2+C\int_{N\times \sph^1_R}  |\nabla\varphi|^2
 \end{equation}
for all smooth sections $\varphi$ of $E$.

 Since $D\sigma= 0$, it follows that 
 \begin{align}
 		0 = \|D\sigma\|^2  \geq& \int_{N\times \sph^1_R}  |\nabla\sigma|^2 + \int_{N\times \sph^1_R} \Big(\frac{\Sc_{\overbar g}}{4} - \frac{2k(2k+1)}{4} - \frac{2k+1}{2R}\Big) |\sigma|^2 \notag \\
 	 \geq& \int_{N\times \sph^1_R}  |\nabla\sigma|^2 + \int_{N_{\delta}(x_0) \times \sph^1_R} (\delta - \frac{2k+1}{2R}) |\sigma|^2 -  \frac{2k+1}{2R}\int_{N\times \sph^1_R} |\sigma|^2 \notag \\
 	 \geq& \Big(1-\frac{(2k+1)C}{R}
 	 \Big) \int_{N\times \sph^1_R}  |\nabla\sigma|^2 \label{eq:stokes}\\
 	 &+\Big(\delta - \frac{2k+1}{2R}-\frac{(2k+1)C}{2R}\Big) \int_{N_{\delta}(x_0) \times \sph^1_R}  |\sigma|^2 .\notag 
 \end{align} 
Since $C$ is independent of $R$, for a given sufficiently large $R$, the above estimates imply that  $\sigma$  vanishes on $N_{\delta}(x_0) \times \sph^1_R$ and $\nabla\sigma$ vanishes on $N\times\sph^1_R$. This together with the inequality \eqref{eq:poincare3} implies that $\sigma$ vanishes on $N\times\sph^1_R$, which leads to a contradiction. Therefore, we have proved that $\Sc_{\overbar g}=2k(2k+1)$. 

Now since  $2k+1\geq 3$,  the proof of  the \textbf{Scalar rigidity} part (II) of Theorem \ref{thm:nonzeroEuler} can be easily  adapted to the current setting to show that $f$ is an isometry. Indeed, assume to the contrary that $f$ is not an isometry. Then there exists  $y_0\in N$ such that $f_*\colon T_{y_0}N \to T\sph^{2k+1}$ is not an isometry.  Consider the singular value decomposition of $f_*\colon T_{y_0}N\to T_{f(y_0)}\sph^{2k+1}$, that is, there exist orthonormal bases $\{\overbar e_i\}_{1\leq i\leq 2k+1}$ of $T_{y_0}N$ and $\{e_i\}_{1\leq i\leq 2k+1}$ of $T_{f(y_0)}\sph^{2k+1}$ such that $f_*\overbar e_i=\mu_ie_i$ for some $\mu_i\geq 0$. Since $2k+1\geq  3$ and $\mu_i\mu_j \leq 1$ for all $i\neq j$,  there exist $1\leq \alpha, \beta\leq 2k+1$ with $\alpha\neq \beta$ such that   $$\mu_\alpha \mu_\beta < 1.$$  This together with line \eqref{eq:D^2-sphere2} implies that there is $\delta'>0$ such that
$$\frac{\Sc_{\overbar g}}{4}-\frac{1}{2}\sum_{k,l}\langle f_*\overbar w_k,w_l\rangle \, \overbar c(\overbar w_k)\otimes c(w_l) \geq \delta' - \frac{2k+1}{R}, \quad \quad \forall x\in\mathcal N_{\delta'}(y_0)\times \sph^1_R.$$
Now by applying the same estimates as in \eqref{eq:poincare3} and \eqref{eq:stokes}, we arrive at a contradiction. This finishes the proof.   
 \end{proof}

Similar remarks also apply to the following improvement of Llraull's theorem due to Listing \cite{Listing:2010te}.  
	\begin{theorem}[{Listing \cite{Listing:2010te}}]\label{thm:llarull-coeff}
		Let $(\mathbb S^{n}, g_{st})$ be the standard round unit sphere of odd  dimension $n\geq 3$. Let $(N,\overbar g)$ be a closed spin Riemannian manifold and  $f\colon N\to M$  a smooth map with non-zero degree. If  $\Sc_{\overbar g}\geq \|\Bigwedge^2f_*\|\cdot n(n-1)$,  then there exists a constant $a>0$ such that $f\colon (N,a\cdot \overbar g) \to (\mathbb S^{n}, g_{st})$ is an isometry. 
	\end{theorem}
\begin{proof}
	Let $S(TN\oplus f^*T\sph^n)$ be the spinor bundle of $TN\oplus f^*T\sph^n$ over $N$ and $D$ its Dirac operator. For each point $y_0\in N$ in $N$, we pick a singular value decomposition of $f_*\colon T_{y_0}N\to T_{f(y_0)}\sph^{n}$, that is, there exist orthonormal bases $\{\overbar e_i\}_{1\leq i\leq n}$ of $T_{y_0}N$ and $\{e_i\}_{1\leq i\leq n}$ of $T_{f(y_0)}\sph^{n}$ such that $f_*\overbar e_i=\mu_ie_i$ for some $\mu_i\geq 0$. By the Bochner--Lichnerowicz--Weitzenb\"ock formula, we have 
	$$D^2=\nabla^*\nabla+\frac{\Sc_{\overbar g}}{4}+\frac 1 8\sum_{i\ne j} \mu_i\mu_j\overbar c(\overbar e_i)\overbar c(\overbar e_j)\otimes c(e_i)c(e_j).$$
	We may assume that $\mu_1\leq\cdots\leq \mu_n$. Thus $\|\Bigwedge^2f_*\|=\mu_{n-1}\mu_n$, and
\begin{equation}\label{eq:sphere}
		\frac{\Sc_{\overbar g}}{4}+\frac 1 8\sum_{i\ne j} \mu_i\mu_j\overbar c(\overbar e_i)\overbar c(\overbar e_j)\otimes c(e_i)c(e_j)\geq \frac{\Sc_{\overbar g}}{4}-\frac{\mu_{n-1}\mu_n\cdot n(n-1)}{4} \geq 0.
\end{equation}
	The equality holds if and only if $\mu_i\mu_j=\mu_{n-1}\mu_n$ for all $i\ne j$. Since $n\geq 3$, $\mu_i\mu_j=\mu_{n-1}\mu_n$ for all $i\ne j$ implies that 
	\begin{enumerate}
		\item either all $\mu_i$'s are nonzero and equal to each other, 
		\item or $\mu_i =0$ for all $1\leq i\leq n-1$.  
	\end{enumerate} 
Since $n$ is odd, we consider the product with a large circle as in the proof of Theorem \ref{thm:llarull-odd}. If there exists a point $x\in N$ such that condition (1) and condition (2) above both fail, then line \eqref{eq:sphere} becomes a strict inequality in a small neighborhood of $x$ in $N$. Now the same argument from the proof of Theorem \ref{thm:llarull-odd} together with a Poincar\'e-type inequality leads to a contradiction. To summarize, for any point $x\in N$, either condition (1) or condition (2) holds at $x$. 
	
	The rest of the proof follows from the same argument in \cite[Theorem 1.6 (I)]{Wang:2021tq}. Let $U$ be the open set of $N$ where condition (1) holds. Since the degree of $f$ is non-zero, $U$ is non-empty. Let $h = \|\Bigwedge^2 f_\ast\|^{1/2}$. Then we have $h^2 \cdot  \overbar g = f^\ast g_{st}$ on $U$.  By the formula of scalar curvature under conformal change, we have on $U$
	$$f^*\Sc_{g_{st}}=\frac{\Sc_{\overbar g}}{h^2}-\frac{2(n-1)}{h^3}\Delta h-\frac{(n-1)(n-4)}{h^4}|dh|^2.$$
	Since $\Sc_{\overbar g}=h^2\cdot f^*\Sc_{g_{st}}$ and $h\equiv 0$ on $N-U$, we see that
	$$2h^k\Delta h=-(n-4)h^{k-1}|\nabla h|^2$$
	on the entire $N$ for all $k\geq 1$. By the Stokes theorem, we have
	$$0=\int_N\big(h^k\Delta h+\langle\nabla(h^k),\nabla h\rangle\big)=\int_N \big(h^k\Delta h+kh^{k-1}|\nabla h|^2\big).$$
	Therefore
	$$\Big(k-\frac{n-4}{2}\Big)\int_N h^{k-1}|\nabla h|^2=0$$
	for all $k\geq 1$. As a result, $\nabla h\equiv 0$ on $N$. Therefore, $h$ is a nonzero constant function, say $a$, on $N$. It follows that  $f\colon (N,a\cdot \overbar g) \to (\mathbb S^{n}, g_{st})$ is a local  isometry.  Since $n\geq 3$, the sphere $\sph^n$ is simply connected. Therefore,  $f\colon (N,a\cdot \overbar g) \to (\mathbb S^{n}, g_{st})$ is an isometry. This finishes the proof. 
\end{proof}

Now let us prove Theorem \ref{thm:extremalitySphere}. 

\begin{proof}[Proof of Theorem \ref{thm:extremalitySphere}]
	Since an even dimensional sphere has non-zero Euler characteristic, which has already been covered by Theorem \ref{thm:nonzeroEuler}, we shall only focus the case where $X$ is an odd dimensional standard round sphere. Our proof will be a combination of the proof of Theorem \ref{thm:llarull-odd} and the proof of Theorem \ref{thm:nonzeroEuler}.

	Similar to the proof of Proposition \ref{prop:special}, for a given $0<\lambda <c$, let 
 $M_\lambda = [-\lambda, \lambda]\times \sph^{n-1} \subset M$.  We assume without loss of generality that  $N_\lambda=f^{-1}(M_\lambda)$ is a manifold with boundary. We denote by $\widetilde M_\lambda=[-\lambda,\lambda]\times \sph^n$, equipped with the metric
	$$\widetilde g=dr^2+\varphi(r)^2\cdot g^{\sph^n}_{st}.$$

	Let $N_\lambda\times \sph^1_R$ be the Riemannian  product of $(N_\lambda, \overbar g)$ and the circle  $\mathbb S^1_R$ of radius $R$. 
	For any $\varepsilon',\varepsilon>0$, let $\expand$ be the smooth function given in line \eqref{eq:expand}: 
	\begin{equation}
		\expand\colon [-\gamma,\gamma]\to [-c,c]
	\end{equation}
	such that
	\begin{itemize}
		\item $\expand(\pm\gamma)=\pm c$,
		\item $1\leq \expand'(r)\leq 1+\varepsilon'$ for $r\in\mathcal N_\varepsilon(I_0)$, and
		\item $\expand'(r)=1$ for $r\in[-\gamma,\gamma]\backslash\mathcal N_\varepsilon(I_0)$.
	\end{itemize}
	where $I_0$ is a subinterval of $(-c, c)$ as given at the beginning of Subsection \ref{sec:estimates} and $\mathcal N_\varepsilon(I_0)$ is the $\varepsilon$-neighborhood of $I_0$.

	For $\lambda\in(0,\gamma)$ and $\mu=\expand(\lambda)$, we consider the map
	$$\widetilde h\colon (N_\lambda\times \sph^1,\overbar g+R^2d\theta^2)\to (\widetilde M_\mu,\widetilde g)$$
	defined as the composition of the following maps
	\begin{equation}\label{eq:h-tilde}
		\begin{split}
			&(N_\lambda\times \sph^1,\overbar g+R^2d\theta^2)\xlongrightarrow{h\times\id}(M_\mu\times \sph^1, dr^2+\varphi(r)^2g^{\sph^{n-1}}_{st}+R^2d\theta^2)\xlongrightarrow{\id} \\
			&(M_\mu\times \sph^1,dr^2+\varphi(r)^2(g^{\sph^{n-1}}_{st}+d\theta^2))\xrightarrow{\id_r\times\alpha}
			(\widetilde M_\mu,\widetilde g),
		\end{split}
	\end{equation}
where $h$ is given in the proof of Proposition \ref{prop:special} and $\alpha$ is the map from $\sph^{n-1}\times\sph^1$ to $\sph^n$ in the proof of Theorem \ref{thm:llarull-odd}.
	By construction, $\widetilde h$ is a smooth map with non-zero degree.
	
	Set $E=S(T(N_\lambda\times \sph^1)\oplus \widetilde h^*T\widetilde M_\mu)$. We impose the same  boundary condition $B$ on sections of $E$ at $\partial N_\lambda\times \sph^1$ as given in Definition \ref{def:boundaryCondition}. Let $\nabla$ be the spinorial connection on $E$ determined by the Levi-Civita connections of  $N_\lambda\times \sph^1$ and $\widetilde M_\mu$. A new connection $\tcon$ on $E$ is defined as follows
	\begin{equation}
		\tcon_\xi=\nabla_\xi+\frac 1 2 c(\nabla_{h_*\xi }^{\widetilde M}\partial_r)c(\partial_r),~\forall \xi\in C^\infty(N_\lambda\times \sph^1, T(N_\lambda\times \sph^1)).
	\end{equation}
	Let $\{\overbar e_i\}_{1\leq i\leq n+1}$ be a local orthonormal basis of $T(N_\lambda\times \sph^1)$, where $\{\overbar e_i\}_{1\leq i\leq n}$ is a local orthonormal basis of $TN_\lambda$ and $\overbar e_{n+1}$ is an orthonormal basis of $T\sph^1$. Let $\widehat D$ be the Dirac operator on $E$ with respect to $\tcon$, 
	\begin{equation}
		\tdirac=\sum_{i=1}^{n+1}\overbar c(\overbar e_i)\tcon_{\overbar e_i}.
	\end{equation}
	Similar to the proof of Proposition \ref{prop:special}, we write 
	\begin{equation}
		\Psi=\frac{n}{2}\psi(h^\ast r) \cdot\mathscr E c(\partial_r)
	\end{equation}
	where $\psi = \varphi'/\varphi$,  and define 
	\begin{equation}
		\widehat D_\Psi=\widehat D+\Psi.
	\end{equation}
We emphasize that here $n$ is equal to the dimension of $\widetilde M_\mu$ minus one.

	Since $\chi(\widetilde M_\mu) \neq 0$ and $\deg(\widetilde h)\neq 0$,  we have $\ind(\tdirac_\Psi)\ne 0$, cf. the proof  of Proposition \ref{prop:special}.  Therefore, there exists a non-zero section $\sigma$ of $E$ satisfying the boundary condition $B$ such that $\tdirac_\Psi\sigma=0$.
	
	Note that
	\begin{equation}\label{eq:Bsquare2}
		0=|\widehat D_\Psi\sigma|^2=|\widehat D\sigma|^2+\big(\langle\widehat D\sigma,\Psi\sigma\rangle+\langle\Psi\sigma,\widehat D\sigma\rangle\big)+|\Psi\sigma|^2.
	\end{equation}	
	By the Bochner--Lichnerowicz--Weitzenb\"ock formula, we have $\widehat D^2=\widehat\nabla^*\widehat\nabla+\mathcal R$.	Now we compute the curvature operator $\mathcal R$. 
	Let $\widetilde P$ be the orthogonal projection from $T\widetilde M$ onto  $(\partial_r)^\perp$, and $P$ the orthogonal projection from $TM$ onto $(\partial_r)^\perp$, where $(\partial_r)^\perp$ is the orthogonal complement of $\partial_r$ in $T\widetilde M$ (resp. $TM$).  By the Bochner--Lichnerowicz--Weitzenbock formula for $\tcon$, we have
	\begin{equation}\label{eq:R}
		\mathcal R=\frac{\Sc_{\overbar g}}{4}-\frac{1}{2\varphi(h^*r)^2}\sum_{i<j\leq n+1}\overbar c(\overbar e_i\wedge\overbar e_j) c(\widetilde P(\widetilde h_*\overbar e_i)\wedge \widetilde P(\widetilde h_*\overbar e_j)),
	\end{equation}
	where the Clifford action of $2$-forms is defined by
	$$c(u\wedge v)\coloneqq\frac 1 2 (c(u)c(v)-c(v)c(u)).$$
	
	We claim that
	\begin{equation}
		\sum_{i<j\leq n+1}\overbar c(\overbar e_i\wedge\overbar e_j) c(\widetilde P(\widetilde h_*\overbar e_i)\wedge \widetilde P(\widetilde h_*\overbar e_j))\leq \|\Bigwedge^2(\widetilde P\widetilde h_*)\|_1,
	\end{equation}
	where $\|\Bigwedge^2(\widetilde P\widetilde h_*)\|_1$ denotes the trace norm of the linear map
	$$\Bigwedge^2(\widetilde P\widetilde h_*)\colon \Bigwedge^2T(N_\lambda\times \sph^1)\to \Bigwedge^2T\widetilde M_\mu.$$
	
Indeed, let us consider the singular value decomposition of $\Bigwedge^2(\widetilde P\widetilde h_\ast)$. More precisely, there exist orthonormal bases $\{\overbar \omega_k\}$ of $\Bigwedge^2T(N_\lambda\times \sph^1)$, $\{\omega_k\}$ of $\Bigwedge^2T\widetilde M_\mu$, and $\lambda_k\geq 0$ for $1\leq k\leq n(n+1)/2$ such that
	$$(\widetilde P\widetilde h_*)\overbar\omega_k=\lambda_k\omega_k.$$
	Therefore, we have
	\begin{align*}
		&\sum_{i<j\leq n+1}\overbar c(\overbar e_i\wedge\overbar e_j) c(\widetilde P(\widetilde h_*\overbar e_i)\wedge \widetilde P(\widetilde h_*\overbar e_j))\\
		=&\sum_{k=1}^{n(n+1)/2}\overbar c(\overbar\omega_k) c(\omega_k)\lambda_k	\leq\sum_{k=1}^{n(n+1)/2}\lambda_k=\|\Swedge^2(\widetilde P\widetilde h_\ast)\|_1.
	\end{align*}
	This finishes the proof of the claim.
	
	Recall that $h\colon N_\lambda\to M_\mu$ is given by $h=h_\expand\circ f$, cf.  line \eqref{eq:hchi}. We define
	$$\beta\colon (M_\mu\times \sph^1,\overbar g+R^2d\theta^2)\to (\widetilde M_\mu,\widetilde g=g^{\sph^{n+1}}_{st})$$
	as the composition of the following maps
	\begin{equation*}
		\begin{split}
			(M_\mu\times \sph^1, g+R^2d\theta^2)\xlongrightarrow{\id} 
			(M_\mu\times \sph^1,dr^2+\varphi(r)^2(g^{\sph^{n-1}}_{st}+d\theta^2))\xrightarrow{\id_r\times\alpha}
			(\widetilde M_\mu,\widetilde g).
		\end{split}
	\end{equation*}
	Then we have
	$$\widetilde h=\beta\circ(h_\expand\times \id)\circ f.$$
	We notice that
	$$\widetilde P\widetilde h_*=(\widetilde P\beta_*P)\circ
	(P(h_{\expand}\times id)_*P)\circ f_*.$$
	For each $r\in(-c,c)$, we denote by $X_r$ the leaf of $M$ at $r$, and $\widetilde X_r$ the leaf of $\widetilde M$ at $r$. Then for each fixed $r$, we have
	$$\widetilde P\beta_* P=\beta_*\colon T(X_r\times \sph^1)\to T\widetilde X_r,$$
	and
	$$P(h_{\expand}\times \id)_*P=(h_{\expand}\times \id)_*\colon T(X_r\times \sph^1)\to T(X_{\expand(r)}\times \sph^1),$$
	as $h_\expand$ maps the leaf at $r$ to the leaf at $\expand(r)$.  Therefore, by the H\"{o}lder inequality, we have
	\begin{equation}
		\begin{split}
			\|\Swedge^2(\widetilde P\widetilde h_*)\|_1\leq& \|\Swedge^2(\widetilde P\beta_*P)\|_1\cdot \|\Swedge^2(P(h_\expand\times \id)_*P)\|\cdot\|\Swedge^2 f_*\|\\
			\leq &\frac{\varphi(h^*r)}{\varphi(f^*r)}\cdot\|\Swedge^2(\widetilde P\beta_*P)\|_1,
		\end{split}
	\end{equation}
	where we have used the fact $\|\Swedge^2 f_*\|\leq 1$ since $f$ is distance-non-increasing and $$\|\Bigwedge^2(P(h_\expand\times \id)_*P)\|=\frac{\varphi(h^*r)^2}{\varphi(f^*r)^2} $$
	since  $h_\expand$ maps the leaf at $r$ to the leaf at $\expand(r)$.

	Now we estimate the trace norm of
	$$\Bigwedge^2(\widetilde P\beta_*P)=\Bigwedge^2\beta_*\colon \Bigwedge^2T(X_r\times \sph^1)\to \Bigwedge^2T\widetilde X_r$$
	for each fixed $r$. Let $p$ be the orthogonal projection 
	$$p\colon \Bigwedge^2T(X_r\times \sph^1)\to TX_r\wedge T\sph^1=TX_r\otimes T\sph^1.$$
	Note that the range of $p$ has dimension $(n-1)$, and the orthogonal complement of the range of $p$ has dimension $(n-1)(n-2)/2$. We recall that $\beta$ is distance-non-increasing. Therefore
	\begin{equation}
		\|\Bigwedge^2\beta_*\|_1\leq \|(\Bigwedge^2\beta_*)\circ p\|_1+\|(\Bigwedge^2\beta_*)\circ (1-p)\|_1
		\leq \frac{(n-1)(n-2)}{2}+\frac{n-1}{R}
	\end{equation}
	
	To summarize, we have shown that
	\begin{equation}
		\begin{split}
			\mathcal R\geq& \frac{\Sc_{\overbar g}}{4}-\frac{1}{2\varphi(h^*r)^2}\cdot \frac{\varphi(h^*r)^2}{\varphi(f^*r)^2}\Big(\frac{(n-1)(n-2)}{2}+\frac{n-1}{R} \Big)\\
			=&\frac{\Sc_{\overbar g}}{4}-\frac{(n-2)(n-1)}{4\varphi(f^*r)^2}
			-\frac{n-1}{2R\varphi(f^*r)^2}.
		\end{split}
	\end{equation}
	
	The estimates for the other three terms in line \eqref{eq:Bsquare2} are the same as in the proof of Proposition \ref{prop:special}. Recall the operator $\mathcal Q$ defined on $N_\lambda$ as given in line \eqref{eq:Q}. Since $\tdirac_\Psi \sigma = 0 $, it follows that $\mathcal P\sigma  = \mathcal Q\sigma$.
	To summarize, we have
	\begin{equation}\label{eq:D^2-oddSph}
		\begin{split}
			0=&\int_{N_\lambda\times \mathbb S^1} |\tdirac_\Psi\sigma|^2\\
			\geq &\frac{n}{4(n-1)}\int_{N_\lambda\times \sph^1} |\mathcal Q\sigma|^2+\big(\Sc_{\overbar g}-f^*\Sc_{g}\big)|\sigma|^2	-\frac{n-1}{2R\varphi(f^*r)^2}| \sigma|^2\\
			&+\int_{\partial N_\lambda\times\sph^1}\big(\frac{n}{2(n-1)}H_{\overbar g}+ \frac{n}{2}|\psi(\mu)|\big)|\sigma|^2\\
			&-\frac{\varepsilon'n}{2}\int_{f^{-1}(\mathcal N_\varepsilon(I_0)\times \sph^{n-1})\times\sph^1}(|\psi'(h^*r)|+c_0)|\sigma|^2,
		\end{split}
	\end{equation}
where $c_0$ is independent of $\varepsilon,\varepsilon',\lambda,\mu$ and $R$, and $I_0$ is the interval defined at the beginning of Section \ref{sec:estimates}.
	We remark that we have used condition (3) of Definition \ref{def:admissible} for the warping function $\varphi$ in the above inequality. 
	
	Now we shall prove $\Sc_{\overbar g}=f^*\Sc_g$ by contradiction. Assume to the contrary that the inequality $\Sc_{\overbar g}\geq f^*\Sc_g$ is strict somewhere. More precisely, assume that there is $x_0\in N$ and $\delta>0$ such that
	$$\Sc_{\overbar g}(x)\geq f^*\Sc_{g}(x)+\delta,~\forall x\in\mathcal N_{\delta}(x_0).$$
	 Let $K$ be a compact domain in $N$ containing $\mathcal N_{\delta}(x_0)$ and $f^{-1}(\mathcal N_\varepsilon(I_0)\times \sph^{n-1})$, and $\widetilde K=K\times \sph^1$. For any $\varepsilon,\varepsilon'$ below, we always choose $\lambda$ such that
	\begin{equation}\label{eq:lambda-oddSph}
		\frac{n}{2(n-1)}H_{\overbar g}+ \frac{n}{2}|\psi(\mu)|\geq 1>0
	\end{equation}
	on $\partial N_\lambda$, where $\mu = \rho(\lambda)$. 
	
	Note that, as long as $\varepsilon$ and $\varepsilon'$  are sufficiently small,  there exists $c_1>0$ (independent of $\varepsilon,\varepsilon',\lambda$ and $\mu$) such that $|\psi'(h^*r)|+c_0\leq c_1$ on $f^{-1}(\mathcal N_\varepsilon(I_0))$, where $I_0$ is the interval as chosen in the beginning of Section \ref{sec:estimates}.  Thus line \eqref{eq:D^2-oddSph} yields that
	\begin{equation}\label{eq:D^2-oddSph2}
		\begin{split}
			0\geq&\frac{n}{4(n-1)}\int_{N_\lambda\times \sph^1} |\mathcal Q\sigma|^2	-\frac{n-1}{2R\varphi(f^*r)^2}| \sigma|^2\\
			&+\frac{n\delta}{4(n-1)}\int_{\mathcal N_\delta(x_0)\times\sph^1}|\sigma|^2-\frac{\varepsilon'nc_1}{2}\int_{\widetilde K}|\sigma|^2.
		\end{split}
	\end{equation}
It follows that
\begin{equation*}
	\begin{cases}
		\displaystyle \int_{N_\lambda\times\sph^1}|\mathcal Q\sigma|^2\leq \frac{2(n-1)^2}{nR}\int_{N_\lambda\times\sph^1}\frac{1}{\varphi(f^*r)^2}|\sigma|^2+2c_1\varepsilon'(n-1)\int_{\widetilde K}|\sigma|^2,\vspace{.5cm}\\
		\displaystyle \delta\int_{\mathcal N_\delta(x_0)\times\sph^1}|\sigma|^2\leq \frac{2(n-1)^2}{nR}\int_{N_\lambda\times\sph^1}\frac{1}{\varphi(f^*r)^2}|\sigma|^2+2c_1\varepsilon'(n-1)\int_{\widetilde K}|\sigma|^2.
	\end{cases}
\end{equation*}
Therefore, we have 
\begin{equation}\label{eq:3.22}
\begin{split}
		&\frac 2 3 \int_{N_\lambda\times\sph^1}|\mathcal Q\sigma|^2+\frac 1 3\int_{\widetilde K}|\mathcal Q\sigma|^2+\int_{\mathcal N_\delta(x_0)\times\sph^1}|\sigma|^2\\
		\leq~& \varepsilon'\cdot 2c_1(n-1)\Big(1+\frac 1 \delta\Big)\int_{\widetilde K}|\sigma|^2\\
		&+\frac 1 R\cdot \frac{2(n-1)^2}{n}\Big(1+\frac 1 \delta\Big)\cdot\sup_{N_\lambda}\frac{1}{\varphi(f^*r)^2}\cdot \int_{N_\lambda\times\sph^1}|\sigma|^2
\end{split}
\end{equation}

By Lemma \ref{lemma:poincare}, we have the Poincar\'e-type inequalities
\begin{equation}\label{eq:poincare2}
	\begin{cases}
		\displaystyle 	\int_{\widetilde K}|\sigma|^2\leq C\int_{\mathcal N_\delta(x_0)\times\sph^1}|\sigma|^2+C\int_{\widetilde K}|\mathcal Q\sigma|^2, \vspace{.5cm}\\
		\displaystyle \int_{N_\lambda\times\sph^1}|\sigma|^2\leq C'\int_{\mathcal N_\delta(x_0)\times\sph^1}|\sigma|^2+C'\int_{N_\lambda\times\sph^1}|\mathcal Q\sigma|^2.
	\end{cases}
\end{equation}
  The readers should not confuse the two constants $C$ and $C'$ in the two inequalities of \eqref{eq:poincare2} above. The first inequality of \eqref{eq:poincare2} only requires the geometric data of $f\colon N\to M$ near $K$, which is not affected by the $\sph^1$-direction. Hence the constant $C>0$ only depends on $K$ and $\delta$, and is independent of $\varepsilon,\varepsilon',\lambda,\mu$ and $R$. The second inequality of \eqref{eq:poincare2} requires the geometric data of the entire $N_\lambda$. In particular, the constant $C'>0$ may depend on $\varepsilon,\varepsilon',\lambda,\mu$, but is still independent of $R$.

	Now given the compact domain $K$ and $\delta>0$, we choose  $\varepsilon>0$ and $\varepsilon' >0$ small enough so that  
	$$2Cc_1\varepsilon'(n-1)\Big(1+\frac{1}{\delta}\Big)\leq \frac 1 3,$$
	and choose $\lambda$ as in line \eqref{eq:lambda-oddSph}. With $\varepsilon,\varepsilon',\lambda$ chosen, the constant $C'$ is now fixed. Finally, since $N_\lambda$ is also fixed and compact, we choose $R$ large enough so that 
	$$\frac 1 R\cdot \frac{2C'(n-1)^2}{n}\Big(1+\frac 1 \delta\Big)\cdot \sup_{N_\lambda}\frac{1}{\varphi(f^*r)}\leq \frac 1 3.$$
	It follows from \eqref{eq:3.22} and \eqref{eq:poincare2} that
	$$\frac 1 3 \int_{N_\lambda\times\sph^1}|\mathcal Q\sigma|^2+\frac 1 3\int_{\mathcal N_\delta(x_0)\times\sph^1}|\sigma|^2\leq 0.$$
	This together with the second inequality in line \eqref{eq:poincare2} implies that  $\sigma$ vanishes on $N_\lambda\times\sph^1$, which leads to a contradiction. Therefore, we have proved that $\Sc_{\overbar g}=f^*\Sc_g$. 
	
	Now if in addition $n\geq 3$ and $\varphi$ is strictly log-concave, then the proof of  the \textbf{Scalar rigidity} part of Theorem \ref{thm:nonzeroEuler} or the proof of the rigidity part of Theorem \ref{thm:llarull-odd} can be easily  adapted to the current setting to show that $f$ is an isometry. We shall not repeat the details. This completes the proof.  
\end{proof}

\section{Scalar curvature rigidity of degenerate toric bands}\label{sec:torus}
In this section, we first prove Theorem \ref{thm:extremalityTorus-intro}, which is an improvement of Theorem  \ref{thm:extremality} for the case where the leaf $X$ of $M$ is a flat torus. The general case of  Theorem \ref{thm:extremality} will then follow by a combination of the proofs of Theorem \ref{thm:nonzeroEuler}, Theorem \ref{thm:extremalitySphere} and Theorem \ref{thm:extremalityTorus-intro}. 
\begin{proof}[Proof of Theorem \ref{thm:extremalityTorus-intro}]
	Without loss of generality, we assume that $(n-1)$ is even. The case where $(n-1)$ is odd can be proved similarly by taking product with a circle as in the proof of Theorem \ref{thm:extremalitySphere}.
	
	
	The torus $\mathbb T^{n-1}$ is enlargable \cite{MR569070}. More precisely, for any $\epsilon>0$, there exists a finite-sheeted covering space of $\mathbb T^{n-1}$ (equipped with the lifted metric) which admits an $\epsilon$-contracting map onto the standard round sphere $\sph^{n-1}$  such that the map is constant at infinity and of non-zero degree. In particular, for a   finite covering space  $\mathbb T_\Lambda^{n-1}$  of $\mathbb T^{n-1}$, let us denote this  $\epsilon$-contracting map by  $\vartheta_\Lambda\colon \mathbb T^{n-1}_\Lambda\to \sph^{n-1}$. Let $(-c, c)\times \sph^{n-1}$ be the Riemannian product of $\sph^{n-1}$ with the interval $(-c, c)$. Consider the map 
	\[ \id \times \vartheta_\Lambda \colon M  = (-c,c) \times \mathbb T^{n-1}_\Lambda \to (-c, c)\times \sph^{n-1}. \] 
	Note that the metric of the leaf $\{r\}\times \mathbb T^{n-1}_\Lambda$ has be rescaled by a factor of $\varphi(r)^2$. But in any case,  for any $\epsilon>0$ and any $ 0<\ell <c$,  there exists a sufficiently large finite-sheeted covering space $\mathbb T^{n-1}_{\Lambda}$ of $\mathbb T^{n-1}$ such that 
	\[ \Theta_\Lambda\coloneqq \id \times \vartheta_\Lambda \colon  [-\ell, \ell] \times \mathbb T^{n-1}_\Lambda \to [-\ell, \ell]\times \sph^{n-1}. \] 
	is $\epsilon$-contracting and of non-zero degree. 
	
	Let $N_\Lambda$ be the covering space over $N$ induced by the covering space 
	\[ (-c, c)\times \mathbb T^{n-1}_\Lambda \to M = (-c, c)\times \mathbb T^{n-1}\] via the map $f\colon N \to M$. The map $f$ lifts to a map $N_{\Lambda}\to (-c,c)\times \mathbb T^{n-1}_\Lambda$, which we still denote by $f$.	Let $S_{N_\Lambda}$ be the spinor bundle of $(N_\Lambda,\overbar g)$.  
	Set $h\coloneqq h_\expand\circ f$ as in line \eqref{eq:hchi}, where the function $\expand$ is defined similarly as in line \eqref{eq:expand}. More precisely, for any $\varepsilon',\varepsilon>0$, there is $0<\gamma<c$ and a smooth function
	\begin{equation}
		\expand\colon [-\gamma,\gamma]\to [-c,c]
	\end{equation}
	such that
	\begin{itemize}
		\item $\expand(\pm \gamma)=\pm c$,
		\item $1\leq \expand'(r)\leq 1+\varepsilon'$ if $r\in\mathcal N_\varepsilon(I_0)$, and
		\item $\expand'(r)=1$ for $r\in [-\gamma,\gamma]\backslash\mathcal N_\varepsilon(I_0)$.
	\end{itemize}	
	where $I_0$ is a subinterval of $(-c, c)$ chosen as at the beginning of Subsection \ref{sec:estimates}. 
	
	Let $\lambda >0$ be sufficiently close to $\gamma$ and $\mu=\expand(\lambda)$.  We denote by 
	\[ N_{\Lambda,\lambda}=f^{-1}([-\lambda,\lambda]\times \mathbb T^{n-1}_\Lambda)\]  and $M_{\Lambda,\mu}=[-\mu,\mu]\times \mathbb T^{n-1}_\Lambda$. By a similar discussion as in Subsection \ref{sec:estimates},  without loss of generality, we may assume that the preimage of $X_r$ is a smooth submanifold of $N$ for $r$ close enough to $\pm c$. Therefore, without loss of generality, we may assume $N_{\Lambda,\lambda}$ is a smooth manifold with boundary. 
	
		Now let us set $E$ to be the spinor bundle 
	$$E=S\big(T{N_{\Lambda,\lambda}}\oplus (\Theta_\Lambda\circ h)^\ast T([-\mu, \mu]\times \sph^{n-1}) \big)   $$
	over $N_{\Lambda,\lambda}$, where $T([-\mu, \mu]\times \sph^{n-1})$ is the tangent bundle of $[-\mu, \mu]\times \sph^{n-1}$. 
	By construction, the pull-back bundle $(\Theta_\Lambda\circ h)^\ast T([-\mu, \mu]\times \sph^{n-1})$ gets arbitrarily flat over $N_{\Lambda,\lambda}$ as $\Lambda$ becomes sufficiently large.  That is, we may assume the curvature of $(\Theta_\Lambda\circ h)^\ast T([-\mu, \mu]\times \sph^{n-1})$ to be as small as we wish, as long as $\Lambda$ is sufficiently large. 
	
	Similar as the proof of Proposition \ref{prop:special}, we consider a specific  Dirac operator together with potential on $N_{\Lambda, \lambda}$ as follows. Let $\partial_r$ be the unit vector in $(\Theta_\Lambda\circ h)^\ast T([-\mu, \mu]\times \sph^{n-1})$ along the direction of $[-\mu, \mu]$. Let $\nabla$ be the spinorial connection on $E$ naturally induced by  the Levi--Civita connection on $N$ and the pull-back of the Levi--Civita connection on $M$. Similar to line \eqref{eq:tcon},  we introduce a new connection on $E$  by
	\begin{equation*}
		\widehat\nabla_\xi\coloneqq \nabla_\xi+\frac 1 2 c(\nabla^{\sph^{n-1}}_{(\Theta_\Lambda\circ h)_*\xi}\, \partial_r)c(\partial_r),
	\end{equation*}
	where $\nabla^{\sph^{n-1}}$ is the Levi--Civita connection of $[-\mu, \mu]\times \sph^{n-1}$. Note that, since $[-\mu, \mu]\times \sph^{n-1}$ is a Riemannian product,  $\partial_r$ is clearly  parallel with respect to the connection $\nabla^{\sph^{n-1}}$. In other words, the ``new'' connection $\widehat\nabla$ above in fact is equal to the original connection $\nabla$ on $E$. We only introduced the new connection so that our notation is more consistent with that from Subsection \ref{sec:estimates}.    Let $\widehat D$ be the Dirac operator on $E$ with respect to $\widehat\nabla$,
	$$\widehat D=\sum_{i=1}^n \overbar c(\overbar e_i)\widehat \nabla_{\overbar e_i}$$
	where $\{\overbar e_i\}_{1\leq i\leq n}$ is  local orthonormal basis of $TN_{\Lambda, \lambda}$.
	
	Recall that we have 
	\begin{equation*}
		\psi=\frac{\varphi'}{\varphi}=(\log\varphi)'.
	\end{equation*}
	We denote by $r\colon M = (-c, c) \times X \to (-c, c)$ the projection to the first component, that is, $r$ maps the leaf $X_t$ to $t$.
	We set 
	\begin{equation*}
		\Psi\coloneqq \frac{n}{2}\cdot  \psi(h^\ast r) \cdot \mathscr E \cdot c(\partial_r), 
	\end{equation*}
	where $ \mathscr E$ is the $\mathbb Z_2$-grading on $E$ and $h^\ast r$ is the function $r\circ h \colon N_\lambda \to [-\mu, \mu]$.  We define 
	\begin{equation}
		\widehat D_\Psi\coloneqq \widehat D+\Psi.
	\end{equation}
	and impose the same boundary condition $B$ as in Definition \ref{def:boundaryCondition}.
 
 Since both $\deg(\Theta_\Lambda\circ h)\neq 0$ and $\chi(\sph^{n-1}) = 2\neq 0$, we have 
 $$\ind(\widehat D_\Psi)=\deg(\Theta_\Lambda\circ h)\cdot\chi([-\mu, \mu]\times \sph^{n-1}) \ne 0.$$
 There exists a non-zero section $\sigma$ of $E$ over $N_{\Lambda, \lambda}$ satisfying the boundary condition $B$ such that  $\tdirac_\Psi\sigma=0$.

Now let us prove $\Sc_{\overbar g} =  f^*\Sc_{g}$ by contradiction. Indeed, we shall show that $\tdirac_\Psi$ is invertible if the inequality $\Sc_{\overbar g} \geq f^*\Sc_{g}$ is strict  somewhere on $N$. By construction,  the curvature of $(\Theta_\Lambda\circ h)^\ast T([-\mu, \mu]\times \sph^{n-1})$ is arbitrarily  small, as long as $\Lambda$ is sufficiently large.   Therefore, if $\sigma$ is a non-zero section of $E$ satisfying the boundary condition $B$ and $\tdirac_\Psi\sigma=0$, then the same proof of  Proposition \ref{prop:D^2} shows that 
	\begin{equation}\label{eq:D^2all-torus}
		\begin{split}
			0=\int_{N_{\Lambda,\lambda}} |\tdirac_\Psi\sigma|^2&\geq \frac{n}{4(n-1)}\int_{N_{\Lambda,\lambda}} |\mathcal Q\sigma|^2+(\Sc_{\overbar g}-f^*\Sc_{g})|\sigma|^2-C_\Lambda|\sigma|^2\\
			&+\int_{\partial N_{\Lambda,\lambda}}\big(\frac{n}{2(n-1)}H_{\overbar g}+ \frac{n}{2}|\psi(\mu)|\big)|\sigma|^2\\
			&-\frac{\varepsilon'n}{2}\int_{f^{-1}(\mathcal N_\varepsilon(I_0)\times\mathbb T^{n-1})}(|\psi'(h^*r)|+c_0)|\sigma|^2,
		\end{split}
	\end{equation}
	where $c_0$ is the same constant as line \eqref{eq:h>=f-c0} and $C_\Lambda$ is a positive constant such that $C_\Lambda\to 0$ as $\Lambda\to\infty$. We emphasize that we have used condition (3) of Definition \ref{def:admissible} for the warping function $\varphi$ in the above inequality \eqref{eq:D^2all-torus}. On the other hand, in the current case, the proof of the above inequality \eqref{eq:D^2all-torus} does \emph{not} require Lemma \ref{lemma:curvature>=}. This is because the scalar curvature of $M$ is calculated  only using the potential $\Psi$ in the current case, and the curvature term coming from $(\Theta_\Lambda\circ h)^\ast T([-\mu, \mu]\times \sph^{n-1})$ is arbitrarily small and has been reflected in the constant $C_\Lambda$ during the estimates. 
	
	Suppose that there is a point $x_0\in N$ such that $\Sc_{\overbar g}>f^*\Sc_g+\delta$ on $\mathcal N_\delta(x_0)$. Let $\Lambda x_0 \subset N_\Lambda$ be the preimage of $x_0$ via the covering map  $N_\Lambda \to N$. Then we also have $\Sc_{\overbar g}>f^*\Sc_g+\delta$ on $\mathcal N_\delta(\Lambda x_0)$. 
	
	Fix a compact set $K$ in $N$ that contains $\mathcal N_\delta(x_0)$ and $f^{-1}(\mathcal N_\varepsilon(I_0)\times X)$, and we denote by $K_\Lambda$ the preimage of $K$ in $N_\Lambda$. Given $\varepsilon>0$ and $\varepsilon'>0$, we choose $\lambda >0$ such that 
	\begin{equation}\label{eq:boundarypositive}
\frac{n}{2(n-1)}H_{\overbar g}+ \frac{n}{2}|\psi(\mu)|\geq 1>0,
	\end{equation}
	where $\mu = \rho(\lambda)$. 
	Without loss of generality,  we may assume that $|\psi'|\leq 1$ on $\mathcal N_\varepsilon(I_0)$. It follows from  the inequality  \eqref{eq:D^2all-torus} that 
	\begin{equation}\label{eq:comparision-torus}
		\begin{cases}
			\displaystyle\int_{N_{\Lambda,\lambda}}|\mathcal Q\sigma|^2\leq \frac{(1+c_0)\varepsilon'(n-1)}{2}\int_{K_\Lambda}|\sigma|^2+C_\Lambda\int_{N_{\Lambda,\lambda}}|\sigma|^2, \vspace{0.2cm}\\
			\displaystyle\delta\int_{\mathcal N_\delta(\Lambda x_0)}|\sigma|^2\leq 2(1+c_0)\varepsilon'(n-1)\int_{K_\Lambda}|\sigma|^2+C_\Lambda\int_{N_{\Lambda,\lambda}}|\sigma|^2.
		\end{cases}
	\end{equation}
	By Lemma \ref{lemma:poincare}, we have the following inequalities 
	\begin{equation}\label{eq:poincare-torus}
		\begin{cases}
			\displaystyle\int_{K_\Lambda}|\sigma|^2\leq
			C\int_{\mathcal N_\delta(\Lambda x_0)}|\sigma|^2+C\int_{K_\Lambda}|\mathcal Q\sigma|^2, \vspace{0.2cm} \\
			\displaystyle\int_{N_{\Lambda,\lambda}}|\sigma|^2\leq
			C'\int_{K_\Lambda}|\sigma|^2+C'\int_{N_{\Lambda,\lambda}}|\mathcal Q\sigma|^2.
		\end{cases}
	\end{equation}
	for some $C>0$ and $C'>0$. Note that there is $d>0$ such that the $d$-neighborhood of $\Lambda x_0$ in $N_\Lambda$ covers $K_\Lambda$, where $d$ is independent of $\Lambda$. Moreover, the geometric data of $E$ over $N_{\Lambda}$ restricted on $K_\Lambda$ are also independent of $\Lambda$. Therefore, the constant $C>0$ in the first line of \eqref{eq:poincare-torus} is independent of $\varepsilon,\varepsilon',\lambda$ and especially independent of $\Lambda$. The constant $C'$ in the second line of \eqref{eq:poincare-torus} is also independent of $\Lambda$, but  may depend on $\varepsilon,\varepsilon',\lambda$, as one of the integrals takes place on the entire $N_{\Lambda,\lambda}$. 
	
	Now we first choose $\varepsilon$ and $\varepsilon'$ to be sufficiently small  according to $C$, then choose $\lambda>0$ such that the inequality \eqref{eq:boundarypositive} holds, and finally choose $\Lambda$ to be sufficiently large according to $C'$. It is not difficult to see that an appropriate choice of $\varepsilon,\varepsilon',\lambda$ and $\Lambda$ leads to a contradiction. See for example the argument towards the end of the proof of Theorem \ref{thm:extremalitySphere}. This finishes the proof of the equality  $\Sc_{\overbar g} =  f^*\Sc_{g}$. 
	
	If in addition  $\varphi$ is strictly log-concave, then the proof of  the \textbf{Scalar rigidity} part of Theorem \ref{thm:nonzeroEuler} can be easily  adapted to the current setting to show that $N=(-c,c)\times Y$, the map $f$ respects  the product structures, and the metric  $\overbar g$ is also a warped product metric of the form \[ \overbar g = dr^2+\varphi(r)^2g_Y,  \]
	where $g_Y$ is a  metric on $Y$. It remains to show that $g_Y$ is flat. 
	
	As we have showed that $\Sc_{\overbar g} =  f^*\Sc_{g}$, the standard formula for the scalar curvature of warped product metrics (cf. line \eqref{eq:scalar-warp}) shows that $g_Y$ is scalar flat, that is,  $\Sc_{g_Y} \equiv 0$. Note that $Y$ maps to $\mathbb T^{n-1}$ with nonzero degree. Therefore $Y$ is enlargable \cite{MR569070}. It then follows from a theorem of Gromov and Lawson that   $Y$ does not admit a metric of positive scalar curvature,  and any metric of nonnegative scalar curvature on $Y$ is flat \cite[Theorem A]{MR569070}\cite[Chapter IV, Proposition 5.8]{spingeometry}. This finishes the proof. 
\end{proof}

Now the general case of Theorem \ref{thm:extremality} follows from a combination of the proofs of Theorem \ref{thm:nonzeroEuler}, Theorem \ref{thm:extremalitySphere} and Theorem \ref{thm:extremalityTorus-intro}.  

As we have seen in various steps of the proof of Theorem \ref{thm:extremality}, the notion of  admissible warping fucntions, introduced in  Definition \ref{def:admissible},  is crucial for the validity of scalar curvature extremality and rigidity of degenerate warped product spaces.  The log-concavity of $\varphi$ is a commonly expected necessary condition for the scalar curvature extremality and rigidity of warped product spaces. However,  condition (3) of Definition \ref{def:admissible} is new and  has not been previously considered in the literature regarding scalar curvature extremality and rigidity. The following Example \ref{ex:nonrigid} shows that condition (3) in fact is necessary.  More precisely, Example \ref{ex:nonrigid} shows that if we drop condition (3), then scalar curvature extremality and rigidity \emph{fail} for certain degenerate toric bands with warping functions satisfying conditions (1) and (2). 

\begin{example}\label{ex:nonrigid}
	Let $b$ be a positive number. Let $X$ be a flat torus $\mathbb T^{n-1}$ and $M=(-\pi/2,\pi/2)\times \mathbb T^{n-1}$, which carries a warped product metric 
	$$g=dr^2+\cos^{2b}(r)g_{\mathbb T^{n-1}}$$
	with the warping function $\varphi(r)=\cos^{b}(r)$. A direct computation shows that
	$$\psi\coloneqq (\log\varphi)'=-b\tan(r),  $$ $$\psi'=(\log\varphi)''=-b\sec^2(r),$$
	and
	$$\Sc_{g}=-2(n-1)(\psi'+\frac n 2\psi^2)=(n-1)\big(b(2-nb)\tan^2r+2b\big).$$
	In particular, $\varphi$ is strictly log-concave; and $\varphi(r)>0$ for $r\in(-\pi/2, \pi/2)$ such that  $\lim_{r\to \pm \pi} \varphi(r)=0$. Therefore $\varphi$ satisfies conditions (1) and (2) of Definition \ref{def:admissible}.  In the present case,  $\varphi$ satisfies condition (3) of Definition \ref{def:admissible}:
	$$\begin{cases}
		(\psi'+n\psi^2/2)'\leq 0&\text{ near }r=-\pi/2\\
		(\psi'+n\psi^2/2)'\geq 0&\text{ near }r=\pi/2
	\end{cases}$$
	if and only if $b\geq 2/n.$
	
	Now assume that $b<1/n$ and choose $\overbar b\in(b,1/n)$. Consider the following warped product metric 
	$$\overbar g=dr^2+\cos^{2\overbar b}(r)g_{\mathbb T^{n-1}}$$
	 on $M$. Note that the function $r$ is $1$-Lipschitz with respect to both $\overbar g$ and $g$. Hence all assumptions, except condition (3) of  Definition \ref{def:admissible}, of Theorem \ref{thm:extremalityTorus-intro} are satisfied by $(N, \overbar g) \coloneqq (M, \overbar g)$ and $(M, g)$.  However, a direct computation shows that 
	$$\Sc_{\overbar g}-\Sc_g=(n-1)\big(\overbar b(2-n\overbar b)-b(2-nb)\big)\tan^2r+2(n-1)(\overbar b-b)>0.$$
	Therefore, the above choice of $b$ and $\overbar b$  gives a counter-example of Theorem \ref{thm:extremalityTorus-intro}, if we drop  condition (3) in Definition \ref{def:admissible}.
\end{example}

We observe that the warping function $\varphi(r) = \cos^{b}(r)$ in Example \ref{ex:nonrigid} above satisfies conditions (1) and (2) of Definition \ref{def:admissible} for all $b>0$. It satisfies condition (3) of Definition \ref{def:admissible} if and only if $b\geq 2/n$. However, in Example \ref{ex:nonrigid}, we have chosen $b<1/n$ in order to demonstrate  the necessity of condition (3). This raises a natural question. 

\begin{question}
	Does scalar curvature rigidity hold for $M=(-\pi/2,\pi/2)\times \mathbb T^{n-1}$ equipped with the warped product metric 
	$$g=dr^2+\cos^{2b}(r)g_{\mathbb T^{n-1}}$$
	when $ 1/n \leq b <2/n$? 
\end{question}

\section{Scalar-mean rigidity of warped product spaces}\label{sec:scalar-mean}
In this section, we prove Theorem \ref{thm:scalar-mean-intro}. The proof is a straightforward adaption of the proof of Theorem \ref{thm:extremality}. 
\begin{proof}[Proof of Theorem \ref{thm:scalar-mean-intro}]
	For simplicity, we shall only focus on the case where the leaf $X$ has non-zero Euler characteristic.  The general case can be dealt with similarly as the general case of Theorem \ref{thm:extremality}.

As the main ingredients of the proof are very similar to those used in the proof Theorem \ref{thm:extremality}, we shall be brief. We start from the function $\expand$ as given in line \eqref{eq:expand} except that 
$\expand$ is defined to equal the identity map near $-c$ this time. 
	
	We retain the same notation from the proofs of Proposition \ref{prop:special} and  Theorem \ref{thm:nonzeroEuler}. Note that $\partial N_\lambda$ consists of two parts, where $\partial_-N_\lambda=\partial N$ is mapped to $\{-c\}\times X$, and the remaining part $\partial_+N_\lambda$ is mapped to $\{\mu\}\times X$. Let $\sigma$ be a non-zero section of the spinor bundle $E$ satisfying the boundary condition $B$ and $\tdirac_\Psi\sigma=0$. Then by Proposition \ref{prop:D^2}, we have
	\begin{equation}\label{eq:D^2all-mean}
		\begin{split}
			0=\|\tdirac_\Psi\sigma\|^2\geq&\frac{n}{n-1}\int_{N_\lambda} |\mathcal P\sigma|^2+\frac{n}{4(n-1)}\int_{N_\lambda}(\Sc_{\overbar g}-f^*\Sc_{g})|\sigma|^2\\
			&-\frac{\varepsilon' n}{2}\int_{f^{-1}(\mathcal N_\varepsilon(I_0)\times X)}(|\psi'(h^*r)|+c_0)|\sigma|^2\\
			&+\frac{n}{2(n-1)}\int_{\partial_- N_\lambda}\big(H_{\overbar g}-f^*H_g\big)|\sigma|^2\\
			&+\int_{\partial_+ N_\lambda}\big(\frac{n}{2(n-1)}H_{\overbar g}+
			\frac{n}{2}|\psi(h^*r)|\big)|\sigma|^2
		\end{split}.
	\end{equation}
	
	The equality of scalar curvature  $\Sc_{\overbar g}=f^*\Sc_g$ follows from the same argument of Theorem \ref{thm:nonzeroEuler}. Indeed,  otherwise the inequality of scalar curvature  $\Sc_{\overbar g}\geq f^*\Sc_g$ is strict somewhere, then  line \eqref{eq:D^2all-mean} and Lemma \ref{lemma:poincare} lead to a contradiction.

	Now let us prove  $H_{\overbar g}= f^*H_g$. Suppose to the contrary that  the inequality $H_{\overbar g}\geq  f^*H_g$ is strict somewhere on $\partial_-N_\lambda=\partial N$. That is,  there is a small open subset $\mathcal N$ in  $\partial N$ such that  $H_{\overbar g}\geq f^*H_g+\delta$ on $\mathcal N$ for some $\delta>0$. Let $K$ be a compact domain in $N_\lambda$ containing both $\partial N$ and  $f^{-1}(\mathcal N_\varepsilon(I_0)\times X)$. Then an obvious modification of the proof of Lemma \ref{lemma:poincare} shows that 
	\begin{equation}\label{eq:Poincare-codim1}
		\int_{K}|\sigma|^2\leq C\int_{\mathcal N}|\sigma|^2+C\int_{K}|\nabla\sigma|^2
	\end{equation}
	for some $C>0$ independent of the parameters $\varepsilon, \varepsilon', \lambda, \mu$ that appear in the construction of the function $h\colon N_\lambda\to M_\mu$. Indeed, the inequality \eqref{eq:Poincare-codim1} follows from the same proof of Lemma \ref{lemma:poincareEu} and Lemma \ref{lemma:poincare}, except that we replace line \eqref{eq:Rn} by 
	\begin{equation}\label{eq:Rn-codim1}
			\int_{I^{n-1}\times\{t\}}|\alpha|^2\leq e^{(2M+1)\ell}\Big(\int_{I^{n-1}\times\{0\}}|\alpha|^2+\int_K|\beta|^2\Big).
	\end{equation}
	for smooth function $\alpha$ over $\R^n$ and $\beta=\frac{d\alpha}{dx_n}+A\alpha$, and integrate with respect to $t\in[0,\ell]$ as in the  proof of  Lemma \ref{lemma:poincareEu}. Now the inequality \eqref{eq:Poincare-codim1}, together with  line \eqref{eq:D^2all-mean}, shows that $\sigma$ vanishes on $N_\lambda$, which contradicts the fact that $\sigma$ is nonzero.  This proves that $H_{\overbar g}= f^*H_g$. 
	
	The scalar rigidity part of the theorem follows by the same argument as the \textbf{Scalar rigidity} part of Theorem \ref{thm:nonzeroEuler}. This completes the proof. 
\end{proof}

\end{document}